\date{\today}
\newtheorem{theorem}{Theorem}[section]
\newtheorem{proposition}[theorem]{Proposition}
\newtheorem{corollary}[theorem]{Corollary}
\newtheorem{lemma}[theorem]{Lemma}
\theoremstyle{definition}
\newtheorem{remark}[theorem]{Remark}
\newtheorem{definition}[theorem]{Definition}
\begin{document}

\title[Pseudocompact primitive topological inverse semigroups]{Pseudocompact primitive topological inverse semigroups}

\author{Oleg~Gutik}
\address{Department of Mathematics, Ivan Franko Lviv National
University, Universytetska 1, Lviv, 79000, Ukraine}
\email{o\underline{\hskip5pt}\,gutik@franko.lviv.ua,
ovgutik@yahoo.com}

\author{Kateryna~Pavlyk}
\address{Institute of Mathematics,
University of Tartu, J. Liivi 2, 50409, Tartu, Estonia}
\email{kateryna.pavlyk@ut.ee}

\keywords{Topological semigroup, topological inverse semigroup,
primitive inverse semigroup, Brand semigroup, Brandt
$\lambda$-extension, topological Brandt $\lambda$-extension, topological group}

\subjclass[2000]{22A15, 54G12, 54H10, 54H12}

\begin{abstract}
In the paper we study pseudocompact primitive topological inverse semigroups. We describe the structure of pseudocompact primitive topological inverse semigroups and show that the Tychonoff product of a family of pseudocompact primitive topological inverse semigroups is a pseudocompact topological space. Also we prove that the Stone-\v{C}ech compactification of a pseudocompact primitive topological inverse semigroup is a compact primitive topological inverse semigroup.
\end{abstract}

\maketitle

\section{Introduction and preliminaries}

Further we shall follow the terminology of \cite{BucurDeleanu1968, CHK, CP,
Engelking1989, Petrich1984}. The set of positive integers is denoted by $\mathbb{N}$.

A semigroup is a non-empty set with a binary associative
operation. A semigroup $S$ is called \emph{inverse} if for any
$x\in S$ there exists a unique $y\in S$ such that $x\cdot y\cdot
x=x$ and $y\cdot x\cdot y=y$. Such an element $y$ in $S$ is called
\emph{inverse} of $x$ and denoted by $x^{-1}$. The map defined on
an inverse semigroup $S$ which maps any element $x$ of $S$ its
inverse $x^{-1}$ is called the \emph{inversion}.

If $S$ is a semigroup, then by $E(S)$ we denote the subset of idempotents of
$S$, and by $S^1$ (resp., $S^0$) we denote the semigroup $S$ with the
adjoined unit (resp., zero). Also if a
semigroup $S$ has zero $0_S$, then for any $A\subseteq S$ we
denote $A^*=A\setminus\{ 0_S\}$.

If $E$ is a semilattice, then the semilattice operation on $E$
determines the partial order $\leqslant$ on $E$: $$e\leqslant
f\quad\text{if and only if}\quad ef=fe=e.$$ This order is called
{\em natural}. An element $e$ of a partially ordered set $X$ is
called {\em minimal} if $f\leqslant e$  implies $f=e$ for $f\in
X$. An idempotent $e$ of a semigroup $S$ without zero (with zero)
is called \emph{primitive} if $e$ is a minimal element in $E(S)$
(in $(E(S))^*$).

Let $S$ be a semigroup with zero and $\lambda$ be a cardinal $\geqslant 1$. On the set
$B_{\lambda}(S)=\left(\lambda\times S\times\lambda\right)\sqcup\{ 0\}$ we define the semigroup operation as follows
 $$
 (\alpha,a,\beta)\cdot(\gamma, b, \delta)=
  \begin{cases}
    (\alpha, ab, \delta), & \text{ if } \beta=\gamma; \\
    0, & \text{ if } \beta\ne \gamma,
  \end{cases}
 $$
and $(\alpha, a, \beta)\cdot 0=0\cdot(\alpha, a, \beta)=0\cdot
0=0,$ for all $\alpha, \beta, \gamma, \delta\in \lambda$ and $a,
b\in S$. If $S=S^1$ then the semigroup $B_\lambda(S)$ is called
the {\it Brandt $\lambda$-extension of the semigroup}
$S$~\cite{GutikPavlyk2001}. Obviously, ${\mathcal J}=\{
0\}\cup\{(\alpha, {\mathscr O}, \beta)\mid {\mathscr O}$ is the
zero of $S\}$ is an ideal of $B_\lambda(S)$. We put
$B^0_\lambda(S)=B_\lambda(S)/{\mathcal J}$ and we shall call
$B^0_\lambda(S)$ the {\it Brandt $\lambda^0$-extension of the
semigroup $S$ with zero}~\cite{GutikPavlyk2006}. Further, if
$A\subseteq S$ then we shall denote $A_{\alpha,\beta}=\{(\alpha,
s, \beta)\colon s\in A \}$ if $A$ does not contain zero, and
$A_{\alpha,\beta}=\{(\alpha, s, \beta)\colon s\in A\setminus\{ 0\}
\}\cup \{ 0\}$ if $0\in A$, for $\alpha, \beta\in {\lambda}$. If
$\mathcal{I}$ is a trivial semigroup (i.e., $\mathcal{I}$ contains only one element), then by ${\mathcal{I}}^0$ we denote the semigroup $\mathcal{I}$ with the adjoined zero. Obviously, for any
$\lambda\geqslant 2$ the Brandt $\lambda^0$-extension of the
semigroup ${\mathcal{I}}^0$ is isomorphic to the semigroup of
$\lambda\times\lambda$-matrix units and any Brandt
$\lambda^0$-extension of a semigroup with zero contains the
semigroup of $\lambda\times\lambda$-matrix units. Further by
$B_\lambda$ we shall denote the semigroup of $\lambda\times\lambda$-matrix units and by $B^0_\lambda(1)$ the subsemigroup of $\lambda\times\lambda$-matrix units of the Brandt
$\lambda^0$-extension of a monoid $S$ with zero. A completely
$0$-simple inverse semigroup is called a \emph{Brandt
semigroup}~\cite{Petrich1984}. By Theorem~II.3.5~\cite{Petrich1984}, a semigroup $S$ is a Brandt semigroup if and only if $S$ is isomorphic to a Brandt $\lambda$-extension $B_\lambda(G)$ of some group $G$.

Let $\left\{S_{\iota}\colon \iota\in\mathscr{I}\right\}$ be a disjoint family of semigroups with zero such that $0_\iota$ is zero in $S_{\iota}$ for any $\iota\in\mathscr{I}$. We put $S=\{0\}\cup\bigcup\left\{S_{\iota}^*\colon \iota\in\mathscr{I}\right\}$, where $0\notin\bigcup\left\{S_{\iota}^*\colon \iota\in\mathscr{I}\right\}$, and define a semigroup operation on $S$ in the following way
\begin{equation*}
    s\cdot t=
\left\{
  \begin{array}{cl}
    st, & \hbox{if } st\in S_{\iota}^* \hbox{ for some } \iota\in\mathscr{I};\\
    0, & \hbox{otherwise}.
  \end{array}
\right.
\end{equation*}
The semigroup $S$ with such defined operation is called the \emph{orthogonal sum} of of the family of semigroups $\left\{S_{\iota}\colon \iota\in\mathscr{I}\right\}$ and in this case we shall write $S=\sum_{\iota\in\mathscr{I}}S_{\iota}$.

A non-trivial inverse semigroup is called a \emph{primitive inverse semigroup} if all its non-zero idempotents are primitive~\cite{Petrich1984}. A semigroup $S$ is a primitive inverse semigroup if and only if $S$ is the orthogonal sum of the family of Brandt semigroups~\cite[Theorem~II.4.3]{Petrich1984}. We shall call a Brandt subsemigroup $T$ of a primitive inverse semigroup $S$ \emph{maximal} if every Brandt subsemigroup of $S$ which contains $T$, coincides with $T$.

Green's relations $\mathscr{L}$, $\mathscr{R}$ and $\mathscr{H}$ on a semigroup $S$ are defined by:
\begin{itemize}
    \item[] $a\mathscr{L}b$ \; if and only if \; $\{a\}\cup Sa=\{b\}\cup Sb$;
    \item[] $a\mathscr{R}b$ \; if and only if \; $\{a\}\cup aS=\{b\}\cup bS$;
          \;  and
    \item[] $\mathscr{H}=\mathscr{L}\cap\mathscr{R}$,
\end{itemize}
for $a, b\in S$. For details about Green's relations see \cite[\S~2.1]{CP} or \cite{Green1951}. We observe that two non-zero elements $(\alpha_1,s,\beta_1)$ and
$(\alpha_2,t,\beta_2)$ of a Brandt semigroup $B_\lambda(G)$, $s,t\in G$, $\alpha_1,\alpha_2,\beta_1,\beta_2\in\lambda$, are $\mathscr{ H}$-equivalent if and only if $\alpha_1=\alpha_2$ and $\beta_1=\beta_2$ (see \cite[p.~93]{Petrich1984}).

In this paper all topological spaces are Hausdorff. If $Y$ is a subspace of a
topological space $X$ and $A\subseteq Y$, then by
$\operatorname{cl}_Y(A)$ we denote the topological closure of $A$
in $Y$.

We recall that a topological space $X$ is said to be
\begin{itemize}
  \item \emph{compact} if each open cover of $X$ has a finite subcover;
  \item \emph{countably compact} if each open countable cover of $X$ has a finite subcover;
  \item \emph{pseudocompact} if each locally finite open cover of $X$ is finite.
\end{itemize}
According to Theorem~3.10.22 of \cite{Engelking1989}, a Tychonoff topological space $X$ is pseudocompact if and only if each continuous real-valued function on $X$ is bounded. Also, a Hausdorff topological space $X$ is pseudocompact if and only if every locally finite family of non-empty open subsets of $X$ is finite. Every compact space and every countably compact space are  pseudocompact (see \cite{Engelking1989}).

We recall that the Stone-\v{C}ech compactification of a Tychonoff space $X$ is a
compact Hausdorff space $\beta X$ containing $X$ as a dense subspace so that each continuous map $f\colon X\rightarrow Y$ to a compact Hausdorff space $Y$ extends to a continuous map $\overline{f}\colon \beta X\rightarrow Y$ \cite{Engelking1989}.

A {\it topological semigroup} is a Hausdorff topological space with a continuous semigroup operation. A topological semigroup which is an inverse semigroup is called an \emph{inverse topological semigroup}. A \emph{topological inverse semigroup} is an inverse topological semigroup with continuous inversion. A {\it topological group} is a topological space with a continuous group operation and inversion. We observe that the inversion on a topological inverse semigroup is a homeomorphism (see \cite[Proposition~II.1]{EberhartSelden1969}). A Hausdorff topology $\tau$ on a (inverse) semigroup $S$ is called (\emph{inverse}) \emph{semigroup} if $(S,\tau)$ is a topological (inverse) semigroup.

\begin{definition}[\cite{GutikPavlyk2001}]\label{def1}
Let $\mathfrak{TSG}$ be some category of topological semigroups. Let $\lambda$ be a cardinal $\geqslant 1$ and $(S,\tau)\in\mathbf{Ob}\mathfrak{TSG}$ be a topological monoid. Let $\tau_{B}$ be a topology on $B_{\lambda}(S)$ such that
\begin{itemize}
    \item[a)] $\left(B_{\lambda}(S),\tau_{B}\right)\in\mathbf{Ob}\mathfrak{TSG}$; and
    \item[b)] for some $\alpha\in{\lambda}$ the topological subspace $(S_{\alpha,\alpha},\tau_{B}|_{S_{\alpha,\alpha}})$ is naturally homeomorphic to $(S,\tau)$.
\end{itemize}
Then $\left(B_{\lambda}(S), \tau_{B}\right)$ is called a {\it topological Brandt $\lambda$-extension of $(S, \tau)$ in $\mathfrak{TSG}$}.
\end{definition}

\begin{definition}[\cite{GutikPavlyk2006}]\label{def2}
Let $\mathfrak{TSG}_0$ be some category of topological semigroups with zero. Let $\lambda$ be a cardinal $\geqslant 1$ and $(S,\tau)\in\mathbf{Ob}\mathfrak{TSG}_0$. Let $\tau_{B}$ be a topology on $B^0_{\lambda}(S)$ such that
\begin{itemize}
  \item[a)] $\left(B^0_{\lambda}(S),             \tau_{B}\right)\in\mathbf{Ob}\mathfrak{TSG}_0$;
  \item[b)] for some $\alpha\in{\lambda}$ the topological subspace $(S_{\alpha,\alpha},\tau_{B}|_{S_{\alpha,\alpha}})$ is naturally homeomorphic to $(S,\tau)$.
\end{itemize}
Then $\left(B^0_{\lambda}(S), \tau_{B}\right)$ is called a {\it topological Brandt $\lambda^0$-extension of $(S, \tau)$ in $\mathfrak{TSG}_0$}.
\end{definition}

We observe that for any topological Brandt $\lambda$-extension
$B_\lambda(S)$ of a topological semigroup $S$ in the category of topological semigroups there exist a topological monoid $T$ with zero and a topological Brandt $\lambda^0$-extension $B^0_\lambda(T)$ of $T$ in the category of topological semigroups with zero, such that the semigroups $B_\lambda(S)$ and $B^0_\lambda(T)$ are topologically isomorphic. Algebraic properties of Brandt $\lambda^0$-extensions of monoids with zero, non-trivial homomorphisms between them, and a category which objects are ingredients of the construction of such extensions were described in \cite{GutikRepovs2010}. Also, in  \cite{GutikPavlykReiter2009} and \cite{GutikRepovs2010} a category which objects are ingredients in the constructions of finite (resp., compact, countably compact) topological Brandt $\lambda^0$-extensions of topological monoids with zeros was described.

Gutik and Repov\v{s} proved that any $0$-simple countably compact topological inverse semigroup is topologically isomorphic to a topological Brandt $\lambda$-extension $B_{\lambda}(H)$ of a countably compact topological group $H$ in the category of topological inverse semigroups for some finite cardinal $\lambda\geqslant 1$ \cite{GutikRepovs2007}.  Also, every $0$-simple pseudocompact topological inverse semigroup is topologically isomorphic to a topological Brandt $\lambda$-extension $B_{\lambda}(H)$ of a pseudocompact topological group $H$ in the category of topological inverse semigroups for some finite cardinal $\lambda\geqslant 1$ \cite{GutikPavlykReiter2011}. Next Gutik and Repov\v{s} showed in \cite{GutikRepovs2007} that the Stone-\v{C}ech compactification $\beta(T)$ of a $0$-simple countably compact topological inverse semigroup $T$ is a $0$-simple compact topological inverse semigroup. It was proved in \cite{GutikPavlykReiter2011} that the same is true in the case of $0$-simple pseudocompact topological inverse semigroups.

In the paper \cite{BerezovskiGutikPavlyk2010} the structure of compact and countably compact primitive topological inverse semigroups was described and showed that any countably compact primitive topological inverse semigroup embeds into a compact primitive topological inverse semigroup.

In this paper we describe the structure of pseudocompact primitive topological inverse semigroups and show that the Tychonoff product of a family of pseudocompact primitive topological inverse semigroups is a pseudocompact topological space. Also we prove that the Stone-\v{C}ech compactification of a pseudocompact primitive topological inverse semigroup is a compact primitive topological inverse semigroup.

\section{Primitive pseudocompact topological inverse semigroups}

\begin{proposition}\label{proposition-2.1}
Let $S$ be a Hausdorff pseudocompact primitive topological inverse semigroup and $S$ be an orthogonal sum of the family $\{B_{\lambda_{i}}(G_i)\}_{i\in\mathscr{I}}$ of topological Brandt semigroups with zeros, i.e. $S=\sum_{i\in\mathscr{I}}B_{\lambda_{i}}(G_i)$. Then the following statements hold:
\begin{itemize}
    \item[$(i)$]  every non-zero idempotent of $S$ is an isolated point in $E(S)$ and $E(S)$ is a compact semilattice;

    \item[$(ii)$] every non-zero $\mathscr{H}$-class in $S$ is a pseudocompact closed-and-open subset of $S$;

    \item[$(iii)$] every maximal subgroup in $S$ is a pseudocompact subspace of $S$;

    \item[$(iv)$] every maximal Brandt subsemigroup of $S$ is a pseudocompact space and has finitely many idempotents.
\end{itemize}
\end{proposition}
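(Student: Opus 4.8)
The plan is to prove the four items in the stated order, since each rests on the previous ones. For $(i)$ I would first show that every non-zero idempotent $e$ is isolated in $E(S)$ by combining continuity of multiplication with primitivity. Choose an open $W\ni e$ with $0\notin W$; by continuity there is an open $U$ with $e\in U\subseteq W$ and $U\cdot U\subseteq W$. If $f,g\in U\cap E(S)$ were non-zero idempotents, then $fg\in W$ would force $fg\neq 0$, whereas distinct non-zero idempotents of a primitive inverse semigroup annihilate one another; hence $f=g$ and $U\cap E(S)=\{e\}$. To obtain compactness of $E(S)$ I would use the continuous retraction $r\colon S\to E(S)$, $r(x)=xx^{-1}$ (continuous because inversion and multiplication are), which realizes $E(S)$ as a continuous image of the pseudocompact space $S$ and therefore pseudocompact. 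A Hausdorff pseudocompact space whose only non-isolated point is $0$ is compact: for any open cover, the complement of a member containing $0$ is a closed discrete set, which must be finite (otherwise its singletons form an infinite locally finite family of non-empty open sets), giving a finite subcover. Finally $E(S)$ is a topological semilattice as the restriction of the continuous multiplication, so it is a compact semilattice.

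For $(ii)$, let $H$ be the $\mathscr{H}$-class of a non-zero element, with associated idempotents $e=xx^{-1}$ and $f=x^{-1}x$. By $(i)$ the singletons $\{e\},\{f\}$ are clopen in $E(S)$, so $H=r^{-1}(\{e\})\cap\ell^{-1}(\{f\})$, where $r(x)=xx^{-1}$ and $\ell(x)=x^{-1}x$ are continuous, is closed-and-open in $S$. Pseudocompactness of $H$ then follows from the general fact that a clopen subspace of a pseudocompact space is pseudocompact: an infinite locally finite family of non-empty open subsets of $H$ is also such a family in $S$ (the open set $S\setminus H$ witnesses local finiteness at points outside $H$), contradicting pseudocompactness of $S$. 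Item $(iii)$ is then immediate, since a maximal subgroup is the $\mathscr{H}$-class of an idempotent, hence a non-zero $\mathscr{H}$-class, and so is pseudocompact by $(ii)$.

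For $(iv)$, fix a maximal Brandt subsemigroup $T$; by the orthogonal decomposition it is one of the summands $B_{\lambda_i}(G_i)$. First I would check that $T$ is closed: $T^{*}=\bigcup_{\alpha,\beta}(G_i)_{\alpha,\beta}$ is open, being a union of the clopen $\mathscr{H}$-classes from $(ii)$, and any point of $\operatorname{cl}_S(T^{*})\setminus T$ would lie in a clopen $\mathscr{H}$-class of another summand disjoint from $T^{*}$, which is impossible; thus $\operatorname{cl}_S(T)=T$. The key step, and the main obstacle since closed subspaces of pseudocompact spaces need not be pseudocompact, is to show that $T$ itself is pseudocompact. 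For this I would use the map $\rho\colon S\to T$ defined by $\rho(x)=x$ for $x\in T$ and $\rho(x)=0$ otherwise. Using that $T^{*}$ is open and $S\setminus T$ is open (as $T$ is closed), one verifies that $\rho^{-1}(U)$ is open for every open $U\subseteq T$ by treating the cases $0\in U$ and $0\notin U$ separately; hence $\rho$ is a continuous surjection and $T$ is pseudocompact as a continuous image of $S$.

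Finally, with $T$ pseudocompact I would invoke the structure of the summand. In the subspace topology $T=B_{\lambda_i}(G_i)$ is a Brandt semigroup, hence a $0$-simple pseudocompact topological inverse semigroup, so by the cited result \cite{GutikPavlykReiter2011} it is topologically isomorphic to $B_{\lambda}(H)$ for some finite cardinal $\lambda\geqslant 1$. Therefore $\lambda_i=\lambda$ is finite and $T$ has exactly $\lambda_i+1$ idempotents, which completes $(iv)$. The genuinely novel content is thus the closedness of $T$ together with the retraction $\rho$ that transfers pseudocompactness from $S$ to $T$; once $T$ is known to be pseudocompact, finiteness of its idempotent set is handed over to the $0$-simple theorem.
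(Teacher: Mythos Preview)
Your argument is correct throughout, and for items $(i)$--$(iii)$ it is essentially the paper's proof made self-contained: where the paper cites Lemma~7 and Corollary~8 of \cite{BerezovskiGutikPavlyk2010} for the isolation of non-zero idempotents and the clopenness of non-zero $\mathscr{H}$-classes, you supply direct arguments using continuity of $x\mapsto xx^{-1}$ and $x\mapsto x^{-1}x$, and your compactness argument for $E(S)$ matches the paper's.

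Item $(iv)$ is where the two proofs genuinely diverge. The paper does \emph{not} show that $T=B_{\lambda_i}(G_i)$ is closed, nor does it build a retraction. Instead it observes that $E(T)$ is compact (being a closed subset of the compact space $E(S)$), invokes the fact that the product of a compact space with a pseudocompact space is pseudocompact to get $E(T)\times S$ pseudocompact, and then uses the continuous multiplication map $(e,s)\mapsto e\cdot s$ together with the algebraic identity $E(T)\cdot S=T$ to realize $T$ as a continuous image. Your route---closedness of $T$ plus the collapsing map $\rho\colon S\to T$---is equally valid and arguably more elementary, since it avoids the product theorem (Corollary~3.10.27 in \cite{Engelking1989}); the paper's route, on the other hand, avoids checking closedness of $T$ and exploits the semigroup structure more directly. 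Both proofs finish $(iv)$ the same way, by handing the pseudocompact Brandt semigroup $T$ to Theorem~1 of \cite{GutikPavlykReiter2011} to conclude that $\lambda_i$ is finite.
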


\begin{proof}
$(i)$  First part of the statement follows from Lemma~7~\cite{BerezovskiGutikPavlyk2010}. Then the continuity of the semigroup operation and inversion in $S$ implies that the map $\mathfrak{e}\colon S\rightarrow E(S)$ defined by the formula $\mathfrak{e}(x)=x\cdot x^{-1}$ is continuous and hence by Theorem~3.10.24~\cite{Engelking1989}, $E(S)$ is a pseudocompact subspace of $S$ such that every non-zero idempotent in $E(S)$ is an isolated point. Therefore $E(S)$ is compact. Otherwise there exists an open neighbourhood $U(0)$ of the zero $0$ of $S$ in $E(S)$ such that the set $E(S)\setminus U(0)$ is infinite. But this contradicts the pseudocompactness of $E(S)$.

$(ii)$ By Corollary~8 from \cite{BerezovskiGutikPavlyk2010} every non-zero $\mathscr{H}$-class in $S$ is a closed-and-open subset of $S$ and hence by Exercise~3.10.F(d) is pseudocompact.

Statement $(iii)$ follows from $(ii)$.

$(iv)$ Let $B_{\lambda_i}(G_i)$ be a maximal Brandt subsemigroup of the semigroup $S$. Then statement $(i)$ implies that $E(S)$ is a compact and since every non-zero idempotent of $S$ is an isolated point of $E(S)$ we conclude that $E(B_{\lambda_{i}}(G_i))$ is compact for every $i\in\mathscr{I}$. By Corollary~3.10.27 of \cite{Engelking1989} the product of a compact space and a pseudocompact space is a pseudocompact space, and hence we have that the space $E(B_{\lambda_{i}}(G_i))\times S$ is pseudocompact. Since $S$ is a primitive inverse semigroup we conclude that $B_{\lambda_{i}}(G_i)=E(B_{\lambda_{i}}(G_i))\cdot S$. Now, the continuity of the semigroup operation in $S$ implies that the map $\mathfrak{f}\colon E(B_{\lambda_{i}}(G_i))\times S\rightarrow S$ defined by the formula $\mathfrak{f}(e,s)=e\cdot s$ is continuous, and since the continuous image of a pseudocompact space is pseudocompact we conclude that $B_{\lambda_{i}}(G_i)$ is pseudocompact. The last statement follows from Theorem~1 of~\cite{GutikPavlykReiter2011}.
\end{proof}

\begin{lemma}\label{lemma-2.2}
Let $U$ be an open non-empty subset of a topological group $G$ and $A$ be a dense subset of $G$. Then $A\cdot U=U\cdot A=G$.
\end{lemma}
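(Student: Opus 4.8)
The plan is to prove the two equalities by showing $G\subseteq A\cdot U$ and $G\subseteq U\cdot A$, since the reverse inclusions $A\cdot U\subseteq G$ and $U\cdot A\subseteq G$ are immediate from $A,U\subseteq G$. Fix an arbitrary $g\in G$. To write $g=a\cdot u$ with $a\in A$ and $u\in U$ amounts to solving $a=g\cdot u^{-1}$, so it suffices to exhibit a point of $A$ lying in the set $g\cdot U^{-1}$, where $U^{-1}=\{u^{-1}\colon u\in U\}$.

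First I would record the two standard facts about topological groups that drive the whole argument: the inversion map $x\mapsto x^{-1}$ and every left (and right) translation $x\mapsto g\cdot x$ are homeomorphisms of $G$. Consequently $U^{-1}$ is open and non-empty, being the homeomorphic image of the open non-empty set $U$, and hence so is its left translate $g\cdot U^{-1}$.

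Next I would invoke the density hypothesis: since $A$ is dense in $G$, it meets every non-empty open subset of $G$, so in particular $A\cap(g\cdot U^{-1})\neq\varnothing$. Choosing any $a$ in this intersection gives $a=g\cdot u^{-1}$ for some $u\in U$, whence $g=a\cdot u\in A\cdot U$. As $g\in G$ was arbitrary, this yields $G\subseteq A\cdot U$ and therefore $A\cdot U=G$.

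For the equality $U\cdot A=G$ I would run the symmetric argument: solving $g=u\cdot a$ means $a=u^{-1}\cdot g$, so I would intersect $A$ with the open non-empty set $U^{-1}\cdot g$, which is a right translate of $U^{-1}$, and again apply density to obtain a suitable $a\in A$. There is essentially no obstacle in this proof; the only point requiring care is to verify that the set one intersects with $A$ is genuinely open and non-empty, and that is exactly what the homeomorphism properties of inversion and translation supply.
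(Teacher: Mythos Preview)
Your proof is correct and follows essentially the same approach as the paper's: both arguments fix an arbitrary $g\in G$, observe that $g\cdot U^{-1}$ is open and nonempty (via the homeomorphism properties of inversion and translation), intersect it with the dense set $A$, and conclude $g\in A\cdot U$. The only cosmetic difference is that the paper introduces the auxiliary notation $V=U^{-1}$ before carrying out the same computation.
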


\begin{proof}
Since G is a topological group we have that there exists a nonempty open subset $V$ of $G$ such that $V^{-1}=U$. Let $x$ be an arbitrary point of $G$. Then $x\cdot V$ is a nonempty open subset of $G$, because translations in every topological group are homeomorphisms. Then we have that $x\cdot V\cap A\neq\varnothing$ and hence $x\in A\cdot V^{-1}=A\cdot U$. Therefore we get that $G\subseteq A\cdot U$. The converse inclusion is trivial. Hence $A\cdot U=G$. The proof of the equality $U\cdot A=G$ is similar.
\end{proof}

\begin{lemma}\label{lemma-2.3}
Let $\lambda\geqslant 2$ be any cardinal and $U$ be an open non-empty subset of a topological inverse Brandt semigroup $B_\lambda(G)$ such that $U\neq\{0\}$. Then $A\cdot U\cdot A=B_\lambda(G)$ for every dense subset $A$ of $B_\lambda(G)$.
\end{lemma}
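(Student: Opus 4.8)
The plan is to reduce the statement to a computation inside the structure group $G$ and then to invoke Lemma~\ref{lemma-2.2}. Two facts make this reduction possible: $(a)$ every non-zero $\mathscr{H}$-class $G_{\mu,\nu}=\{(\mu,x,\nu)\colon x\in G\}$ of $B_\lambda(G)$ is open in $B_\lambda(G)$, and $(b)$ for every $\mu,\nu\in\lambda$ the natural map $\iota_{\mu,\nu}\colon G\to B_\lambda(G)$, $\iota_{\mu,\nu}(x)=(\mu,x,\nu)$, is a homeomorphism onto $G_{\mu,\nu}$. Granting $(a)$, the density of $A$ in $B_\lambda(G)$ yields that $A\cap G_{\mu,\nu}$ is dense in $G_{\mu,\nu}$ for every non-zero $\mathscr{H}$-class (and in particular non-empty); granting $(b)$, this density can be transported to $G$, which is itself a topological group, being topologically isomorphic to the maximal subgroup $S_{\alpha,\alpha}$ of the topological inverse semigroup $B_\lambda(G)$.

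I would first establish $(a)$ and $(b)$, which use only the continuity of the translations and the Hausdorff property. For $(a)$, fix $\mu,\nu\in\lambda$, let $1_G$ denote the unit of $G$, and consider the continuous map $f\colon B_\lambda(G)\to B_\lambda(G)$, $f(z)=(\mu,1_G,\mu)\cdot z\cdot(\nu,1_G,\nu)$, which is a composition of a left and a right translation. A direct check of the multiplication rule shows $f(z)=z$ for $z\in G_{\mu,\nu}$ and $f(z)=0$ otherwise, so that $f^{-1}(\{0\})=B_\lambda(G)\setminus G_{\mu,\nu}$. Since $\{0\}$ is closed in the Hausdorff space $B_\lambda(G)$, the set $B_\lambda(G)\setminus G_{\mu,\nu}$ is closed and hence $G_{\mu,\nu}$ is open. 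For $(b)$ one writes $\iota_{\mu,\nu}(x)=(\mu,1_G,\alpha)\cdot\iota_{\alpha,\alpha}(x)\cdot(\alpha,1_G,\nu)$, where $\alpha$ is the index for which $\iota_{\alpha,\alpha}$ is the natural homeomorphism of Definition~\ref{def1}; thus $\iota_{\mu,\nu}$ is continuous, and its inverse $z\mapsto\iota_{\alpha,\alpha}^{-1}\bigl((\alpha,1_G,\mu)\cdot z\cdot(\nu,1_G,\alpha)\bigr)$ is continuous on $G_{\mu,\nu}$.

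With $(a)$ and $(b)$ available, consider a non-zero target $t=(\gamma,h,\delta)$. Since $U\neq\{0\}$ is open it contains a non-zero element $u_0=(\alpha_0,g_0,\beta_0)$. Put $C=\iota_{\alpha_0,\beta_0}^{-1}(U)$ and $X=\iota_{\gamma,\alpha_0}^{-1}(A)$. By $(b)$ the set $C$ is open in $G$ and contains $g_0$, hence is non-empty; by $(a)$, $(b)$ and the density of $A$ the set $X$ is dense in $G$. Lemma~\ref{lemma-2.2} then gives $X\cdot C=G$. By $(a)$ the class $G_{\beta_0,\delta}$ is open and non-empty, so density of $A$ provides a point $(\beta_0,y,\delta)\in A$; choosing $x\in X$ and $c\in C$ with $x\cdot c=h\cdot y^{-1}$ yields $(\gamma,x,\alpha_0)\in A$, $(\alpha_0,c,\beta_0)\in U$ and $(\beta_0,y,\delta)\in A$ whose product equals $(\gamma,xcy,\delta)=(\gamma,h,\delta)=t$. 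Hence $t\in A\cdot U\cdot A$.

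It remains to reach $0$, and this is the only place where $\lambda\geqslant 2$ is used: choosing an index $\alpha_1\neq\alpha_0$, the class $G_{\alpha_1,\alpha_1}$ is open and non-empty by $(a)$, hence contains some $a_1\in A$; since the inner indices $\alpha_1$ and $\alpha_0$ differ, $a_1\cdot u_0=0$, so $a_1\cdot u_0\cdot a_1=0\in A\cdot U\cdot A$. Together with the previous paragraph this gives $A\cdot U\cdot A=B_\lambda(G)$, the reverse inclusion being trivial. I expect step $(a)$ to be the main obstacle: without the openness of the $\mathscr{H}$-classes a set dense in $B_\lambda(G)$ need not be dense in, nor even meet, a prescribed $\mathscr{H}$-class, and the entire reduction to Lemma~\ref{lemma-2.2} would fail; it is worth noting that the proof of $(a)$ requires neither inversion nor pseudocompactness, so it applies to an arbitrary topological inverse Brandt semigroup.
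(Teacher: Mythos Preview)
Your proof is correct and follows essentially the same strategy as the paper: show that each non-zero $\mathscr{H}$-class $G_{\mu,\nu}$ is open, deduce that $A\cap G_{\mu,\nu}$ is dense in $G_{\mu,\nu}$, transport the problem to the structure group $G$, and apply Lemma~\ref{lemma-2.2}. The paper obtains the openness of $G_{\alpha,\beta}$ by citing Lemma~7 of \cite{BerezovskiGutikPavlyk2010} (isolated non-zero idempotents) together with continuity of $x\mapsto xx^{-1}$, whereas you derive it directly from continuity of the translations $z\mapsto(\mu,1_G,\mu)\,z\,(\nu,1_G,\nu)$; your route is self-contained and, as you observe, does not use the inversion. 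The remaining steps---reducing $U$ to a single $\mathscr{H}$-class, invoking Lemma~\ref{lemma-2.2}, and handling $0$ via an index $\alpha_1\neq\alpha_0$---are organised slightly differently (you apply Lemma~\ref{lemma-2.2} once and then adjust with a chosen $y$, while the paper writes $L\cdot V\cdot R=G$ directly), but the substance is the same.
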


\begin{proof}
By Lemma~7~\cite{BerezovskiGutikPavlyk2010} we have that every non-zero idempotent of the topological inverse semigroup $B_\lambda(G)$ is an isolated point in $E(B_\lambda(G))$. The continuity of the semigroup operation and inversion in $S$ implies that the map $\mathfrak{e}\colon S\rightarrow E(S)$ defined by the formula $\mathfrak{e}(x)=x\cdot x^{-1}$ is continuous and hence $G_{\alpha,\beta}$ is an open-and-closed subset of $B_\lambda(G)$ for all $\alpha,\beta\in\lambda$. Since $A$ is a dense subset of $B_\lambda(G)$ we conclude that $A\cap G_{\alpha,\beta}$ is a dense subset in $G_{\alpha,\beta}$ for all $\alpha,\beta\in\lambda$. Also, since $\lambda\geqslant 2$ we have that $0\in A\cdot U\cdot A$. This implies that it is sufficient to show that $G_{\alpha,\beta}\subseteq A\cdot U\cdot A$ for all $\alpha,\beta\in\lambda$.

Since $G_{\alpha,\beta}$ is an open subset of $B_\lambda(G)$ for all $\alpha,\beta\in\lambda$, without loss of generality we assume that $U\subseteq G_{\alpha_0,\beta_0}$ for some $\alpha_0,\beta_0\in\lambda$, i.e., $U=V_{\alpha_0,\beta_0}$ for some open subset $V\subseteq G$. Fix arbitrary $\alpha,\beta\in\lambda$. Then there exists subsets $L,R\in G$ such that $A\cap G_{\alpha,\alpha_0}=L_{\alpha,\alpha_0}$ and $A\cap G_{\beta_0,\beta}=R_{\beta_0,\beta}$. It is obviously that $L_{\alpha,\alpha_0}$ and $R_{\beta_0,\beta}$ are dense subsets of $G_{\alpha,\alpha_0}$ and $G_{\beta_0,\beta}$, respectively. This implies that $L$ and $R$ are dense subsets of $G$. Then by Lemma~\ref{lemma-2.2} we have that
\begin{equation*}
    G_{\alpha,\beta}=(L\cdot V\cdot R)_{\alpha,\beta}=L_{\alpha,\alpha_0}\cdot V_{\alpha_0,\beta_0}\cdot R_{\beta_0,\beta} =(A\cap G_{\alpha,\alpha_0})\cdot U\cdot(A\cap G_{\beta_0,\beta})\subseteq A\cdot U\cdot A.
\end{equation*}
This completes the proof of the lemma.
\end{proof}

Lemma~\ref{lemma-2.2} implies the following:

\begin{proposition}\label{proposition-2.4}
Let $U$ be an open non-empty subset of a topological inverse Brandt semigroup $B_1(G)$ such that $U\neq\{0\}$. Then for every dense subset $A$ of $B_1(G)$ the following statements hold:
\begin{itemize}
    \item[$(i)$] $A\cdot U\cdot A=B_1(G)$ in the case when $0$ is an isolated point in $B_1(G)$;

    \item[$(ii)$] $\left(A\cup\{0\}\right)\cdot U\cdot \left(A\cup\{0\}\right)=B_1(G)$ in the case when $0$ is a non-isolated point in $B_1(G)$.
\end{itemize}
\end{proposition}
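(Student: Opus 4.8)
The plan is to analyze what goes wrong with Lemma~\ref{lemma-2.3} when $\lambda=1$. The key structural difference is that $B_1(G)$ is just $G^0 = G \cup \{0\}$, the group $G$ with an adjoined zero, since there is only one index $\alpha=\beta=1$ available. In this degenerate case there are no ``off-diagonal'' $\mathscr{H}$-classes $G_{\alpha,\beta}$ with $\alpha\neq\beta$ to absorb products into $0$, so the argument in Lemma~\ref{lemma-2.3} that automatically placed $0$ inside $A\cdot U\cdot A$ no longer applies. Thus the role of $0$ must be handled separately, and the dichotomy in the statement—whether $0$ is isolated or not—is exactly what determines whether $A$ alone meets every neighborhood structure needed to recover $G$ and reach $0$.

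First I would dispose of the group part, which is common to both cases. Writing $U^* = U \setminus \{0\} = U \cap G$, the hypothesis $U \neq \{0\}$ guarantees $U^*$ is a non-empty open subset of $G$. For case $(i)$, when $0$ is isolated, $G = B_1(G) \setminus \{0\}$ is open-and-closed in $B_1(G)$, so a dense subset $A$ of $B_1(G)$ meets $G$ in a dense subset $A \cap G$ of $G$. I would then invoke Lemma~\ref{lemma-2.2} directly: applying it twice (once on the left, once on the right) to the open set $U^*$ and the dense set $A\cap G$ in $G$ gives
\begin{equation*}
G = (A\cap G)\cdot U^* \cdot (A\cap G) \subseteq A\cdot U\cdot A.
\end{equation*}
To complete $(i)$ I must still show $0 \in A\cdot U\cdot A$. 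Since $0$ is isolated, $\{0\}$ is open, so density of $A$ forces $0 \in A$; then $0 = 0\cdot u\cdot 0 \in A\cdot U\cdot A$ for any $u\in U$. Combining, $A\cdot U\cdot A = B_1(G)$.

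For case $(ii)$, where $0$ is non-isolated, the subtlety is that $A$ need not contain $0$ and $G$ need no longer be closed, so I would work with the enlarged set $A\cup\{0\}$ as the statement dictates. The crucial point to verify is that $(A\cup\{0\})\cap G = A \cap G$ is still dense in $G$: indeed $G$ remains open in $B_1(G)$ (the idempotent-map argument, or just that $\{0\}$ need not be open but $G=B_1(G)\setminus\{0\}$ and the non-zero idempotent is isolated in $E(B_1(G))$ by Lemma~7 of~\cite{BerezovskiGutikPavlyk2010}, giving $G$ open), so density of $A$ in $B_1(G)$ restricts to density in the open subspace $G$. Hence the same double application of Lemma~\ref{lemma-2.2} yields $G \subseteq (A\cup\{0\})\cdot U\cdot(A\cup\{0\})$. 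Finally, $0$ is manifestly captured because $0\in A\cup\{0\}$ gives $0 = 0\cdot u\cdot 0$ in the product.

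The main obstacle I anticipate is the bookkeeping around whether $G$ is open and whether $A$ restricts to a dense subset of $G$, which is precisely where the isolated/non-isolated dichotomy bites: in case $(i)$ openness of $G$ is immediate and $0\in A$ for free, whereas in case $(ii)$ one must argue openness of $G$ from the isolatedness of the non-zero idempotent and then deliberately adjoin $0$ to guarantee reaching the zero element. Everything else reduces to two invocations of Lemma~\ref{lemma-2.2}, exactly as the remark preceding the proposition (``Lemma~\ref{lemma-2.2} implies the following'') suggests.
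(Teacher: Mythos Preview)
Your proposal is correct and matches the paper's intended approach: the paper gives no detailed proof at all, simply writing ``Lemma~\ref{lemma-2.2} implies the following'' before stating the proposition, and your argument is exactly a correct elaboration of that remark via two applications of Lemma~\ref{lemma-2.2} plus the bookkeeping for~$0$. One small simplification: you need not appeal to Lemma~7 of~\cite{BerezovskiGutikPavlyk2010} to see that $G$ is open in case~$(ii)$, since Hausdorffness already makes $\{0\}$ closed and hence $G=B_1(G)\setminus\{0\}$ open; the isolated/non-isolated dichotomy only matters for whether $0\in A$ automatically.
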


Lemma~\ref{lemma-2.3} and Proposition~\ref{proposition-2.4} imply the following proposition:

\begin{proposition}\label{proposition-2.5}
Let $S$ be a Hausdorff primitive inverse topological semigroup such that  $S$ be an orthogonal sum of the family $\{B_{\lambda_{i}}(G_i)\}_{i\in\mathscr{I}}$ of topological Brandt semigroups with zeros. Let $|\mathscr{I}|>1$ and $U$ be an open non-empty subset of $S$ such that $\left(U\cap B_{\lambda_{i}}(G_i)\right)\setminus\{0\}\neq\varnothing$ for any $i\in\mathscr{I}$. Then $A\cdot U\cdot A=S$ for every dense subset $A$ of $S$.
\end{proposition}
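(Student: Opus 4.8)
The plan is to prove both inclusions. The inclusion $A\cdot U\cdot A\subseteq S$ is immediate, since $S$ is a semigroup and $A,U\subseteq S$. Everything therefore reduces to showing $S\subseteq A\cdot U\cdot A$, and here I would exploit the orthogonal-sum decomposition $S=\{0\}\cup\bigcup_{i\in\mathscr{I}}B_{\lambda_i}(G_i)^*$, treating the non-zero part of each summand and the zero $0$ separately.

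First I would establish the topological fact that makes the argument run: \emph{each $B_{\lambda_i}(G_i)^*$ is open in $S$}. To see this, fix $x\in B_{\lambda_i}(G_i)^*$ and put $e=x\cdot x^{-1}\in E(B_{\lambda_i}(G_i))^*$. The map $y\mapsto e\cdot y$ is continuous, and since $S$ is Hausdorff the set $W=\{y\in S : e\cdot y\neq 0\}$ is open and contains $x$ (as $e\cdot x=x\neq 0$). If some $y\in W$ belonged to $B_{\lambda_j}(G_j)^*$ with $j\neq i$, then $e\cdot y=0$ by the definition of the orthogonal sum, a contradiction; and $0\notin W$. Hence $W\subseteq B_{\lambda_i}(G_i)^*$, proving openness. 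Combining this with Lemma~7 of \cite{BerezovskiGutikPavlyk2010} (which, as in the proof of Lemma~\ref{lemma-2.3}, makes each cell $G^{(i)}_{\alpha,\beta}=\{(\alpha,g,\beta)\colon g\in G_i\}$ open-and-closed in $B_{\lambda_i}(G_i)$) I would conclude that every $G^{(i)}_{\alpha,\beta}$ is open in $S$. Since $A$ is dense in $S$, it follows that $A\cap G^{(i)}_{\alpha,\beta}$ is dense in $G^{(i)}_{\alpha,\beta}$ for all $i$ and all $\alpha,\beta\in\lambda_i$.

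With the density on each cell in hand, I would cover the non-zero part of $S$ componentwise. Fix $i$ and $s\in B_{\lambda_i}(G_i)^*$; by hypothesis $U\cap B_{\lambda_i}(G_i)$ is open, non-empty and different from $\{0\}$, so it contains a set $V_{\alpha_0,\beta_0}$ with $V$ open in $G_i$. For $\lambda_i\geqslant 2$ I would repeat the computation in the proof of Lemma~\ref{lemma-2.3}: writing $s\in G^{(i)}_{\alpha,\beta}$, using that $A\cap G^{(i)}_{\alpha,\alpha_0}$ and $A\cap G^{(i)}_{\beta_0,\beta}$ are dense and invoking Lemma~\ref{lemma-2.2}, one gets $G^{(i)}_{\alpha,\beta}=L_{\alpha,\alpha_0}\cdot V_{\alpha_0,\beta_0}\cdot R_{\beta_0,\beta}\subseteq A\cdot U\cdot A$. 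For $\lambda_i=1$ the same conclusion $B_{\lambda_i}(G_i)^*\subseteq A\cdot U\cdot A$ follows from Proposition~\ref{proposition-2.4}. In both cases I use only the density of $A$ on the \emph{cells} $G^{(i)}_{\alpha,\beta}$, never the density of $A$ at $0$ inside a single summand.

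It remains to put $0$ into $A\cdot U\cdot A$, and this is where the hypothesis $|\mathscr{I}|>1$ is essential; indeed it is the main obstacle, because $0$ is a common, possibly non-isolated, point of all the summands, so $A$ need not restrict densely to any single $B_{\lambda_i}(G_i)$ near $0$. I would resolve it by a cross-component product: choose $i\neq j$, pick $u\in U\cap B_{\lambda_i}(G_i)^*$ (non-empty by assumption) and, using that $B_{\lambda_j}(G_j)^*$ is open non-empty and $A$ dense, pick $a\in A\cap B_{\lambda_j}(G_j)^*$. Then $a\cdot u=0$ by the orthogonal-sum rule, whence $a\cdot u\cdot a'=0\in A\cdot U\cdot A$ for any $a'\in A$. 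Together with the previous paragraph this gives $S=\{0\}\cup\bigcup_{i}B_{\lambda_i}(G_i)^*\subseteq A\cdot U\cdot A$, completing the proof.
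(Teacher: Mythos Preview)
Your proof is correct and follows exactly the route the paper indicates: the paper offers no argument beyond the line ``Lemma~\ref{lemma-2.3} and Proposition~\ref{proposition-2.4} imply the following proposition,'' and you have carefully supplied the missing details --- the openness of each $B_{\lambda_i}(G_i)^*$ (hence of every cell) in $S$, the cell-wise density of $A$, and the cross-component product that places $0$ in $A\cdot U\cdot A$, which is precisely where the hypothesis $|\mathscr{I}|>1$ is used.
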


\begin{remark}\label{remark-2.6}
Since by Theorem~II.4.3 of~\cite{Petrich1984} a primitive inverse semigroup $S$ is the orthogonal sum of a family of Brandt semigroups, i.e., $S$ is an orthogonal sum $\sum_{i\in\mathscr{I}}B_{\lambda_{i}}(G_i)$ of Brandt $\lambda_i$-extensions $B_{\lambda_i}(G_i)$ of groups $G_i$, we have that Proposition~12 from \cite{BerezovskiGutikPavlyk2010} describes a base of the topology at any non-zero element of $S$.
\end{remark}

Later by $\mathfrak{TISG}$ we denote the category of topological inverse semigroups, where $\mathbf{Ob}\mathfrak{TISG}$ are all topological inverse semigroups and $\mathbf{Mor}\mathfrak{TISG}$ are homomorphisms between topological inverse semigroups.

The following theorem describes the structure of primitive pseudocompact topological inverse semigroups.

\begin{theorem}\label{theorem-2.7}
Every primitive Hausdorff pseudocompact topological inverse semigroup $S$ is topologically isomorphic to the orthogonal sum $\sum_{i\in\mathscr{I}}B_{\lambda_{i}}(G_i)$ of topological Brandt $\lambda_i$-extensions $B_{\lambda_i}(G_i)$ of pseudocompact topological groups $G_i$ in the category $\mathfrak{TISG}$ for some finite cardinals $\lambda_i\geqslant 1$. Moreover the family
\begin{equation}\label{eq-2.1}
 \mathscr{B}(0)=\left\{S\setminus\big(B_{\lambda_{i_1}}(G_{i_1})\cup
 B_{\lambda_{i_2}}(G_{i_2})\cup\cdots\cup B_{\lambda_{i_n}}(G_{i_n})\big)^*\colon i_1,
 i_2,\ldots,i_n\in\mathscr{I}, n\in\mathbb{N}\right\}
\end{equation}
determines a base of the topology at zero $0$ of $S$.
\end{theorem}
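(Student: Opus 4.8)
The plan is to prove Theorem~\ref{theorem-2.7} in two parts: first establishing the structural decomposition and the finiteness/pseudocompactness of each ingredient, then identifying the base of neighbourhoods at zero. For the structural part, I would begin by invoking Theorem~II.4.3 of~\cite{Petrich1984}, which guarantees that the underlying algebraic primitive inverse semigroup $S$ is an orthogonal sum $\sum_{i\in\mathscr{I}}B_{\lambda_i}(G_i)$ of Brandt semigroups over groups $G_i$. The topological refinement then comes essentially from Proposition~\ref{proposition-2.1}: statement $(iv)$ there already tells us that each maximal Brandt subsemigroup $B_{\lambda_i}(G_i)$ is pseudocompact and has finitely many idempotents, which forces each $\lambda_i$ to be a finite cardinal $\geqslant 1$; and statement $(iii)$ gives that each maximal subgroup $G_i$ is a pseudocompact subspace. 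The fact that the orthogonal sum carries the correct topological inverse semigroup structure, and that $S$ is topologically isomorphic to this sum, should follow by combining these with continuity of the multiplication and inversion restricted to each $B_{\lambda_i}(G_i)$, together with the observation (from Proposition~\ref{proposition-2.1}$(ii)$) that each non-zero $\mathscr{H}$-class, hence each maximal subgroup $G_i$, is open-and-closed.

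The more substantive part is verifying that the family $\mathscr{B}(0)$ in~\eqref{eq-2.1} is a base of the topology at $0$. I would argue this in two directions. First, each set in $\mathscr{B}(0)$ is an open neighbourhood of $0$: since each finite union $\big(B_{\lambda_{i_1}}(G_{i_1})\cup\cdots\cup B_{\lambda_{i_n}}(G_{i_n})\big)^*$ is a finite union of non-zero $\mathscr{H}$-classes (by Proposition~\ref{proposition-2.1}$(ii)$ each such Brandt semigroup with its zero removed is a finite union of open-and-closed $\mathscr{H}$-classes, using finiteness of $\lambda_i$), it is an open subset of $S$ not containing $0$, and therefore its complement is a closed-and-open neighbourhood of $0$. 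Second, and this is where the real work lies, I must show that every open neighbourhood $W$ of $0$ contains a member of $\mathscr{B}(0)$; equivalently, that for all but finitely many indices $i\in\mathscr{I}$ the entire Brandt semigroup $B_{\lambda_i}(G_i)$ is contained in $W$.

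The hard part will be ruling out the possibility that infinitely many $B_{\lambda_i}(G_i)$ meet the complement $S\setminus W$ in a non-zero point. This is precisely where pseudocompactness must be exploited, and I expect the argument to run by contradiction: if infinitely many indices $i$ admit a non-zero point $x_i\in B_{\lambda_i}(G_i)\setminus W$, then by Proposition~\ref{proposition-2.1}$(i)$--$(ii)$ each such point lies in an open-and-closed $\mathscr{H}$-class disjoint from $0$, and these $\mathscr{H}$-classes (across distinct indices) are pairwise disjoint open sets. Choosing one such open set per index should produce an infinite locally finite family of non-empty open subsets of $S$ — local finiteness holding because the only possible accumulation point is $0$, and $0$ has the neighbourhood $W$ meeting only finitely many of them by construction, while every non-zero point is isolated from the family since it sits in a single open-and-closed $\mathscr{H}$-class. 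This contradicts the characterization of pseudocompactness for Hausdorff spaces (recalled in the preliminaries) that every locally finite family of non-empty open sets is finite. The delicate points to handle carefully are establishing the local finiteness at $0$ itself and ensuring the selected open sets are genuinely separated, so that the family is locally finite rather than merely point-finite; once that is secured, the contradiction closes the argument and shows $W$ omits only finitely many Brandt summands, whence $W\supseteq S\setminus\big(B_{\lambda_{i_1}}(G_{i_1})\cup\cdots\cup B_{\lambda_{i_n}}(G_{i_n})\big)^*$ for a suitable finite set of indices.
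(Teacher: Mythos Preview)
Your treatment of the first assertion is correct and essentially matches the paper: the algebraic decomposition from Theorem~II.4.3 of \cite{Petrich1984} combined with Proposition~\ref{proposition-2.1}$(iii)$--$(iv)$ gives pseudocompactness of each $G_i$ and finiteness of each $\lambda_i$. Likewise, your argument that each member of $\mathscr{B}(0)$ is open (as the complement of a finite union of clopen $\mathscr{H}$-classes) is fine.

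The gap is in the second direction. You select, for infinitely many indices $i$, a non-zero point $x_i\in B_{\lambda_i}(G_i)\setminus W$ and take its $\mathscr{H}$-class $H_i$ as the open set. You then assert that $W$ meets only finitely many of the $H_i$'s ``by construction''. But the construction only gives $x_i\notin W$; it does \emph{not} give $H_i\cap W=\varnothing$. Nothing prevents $W$ from meeting every $H_i$ while still missing each chosen point $x_i$, and in that case the family $\{H_i\}$ fails to be locally finite at $0$, so pseudocompactness yields no contradiction. You flag this step as ``delicate'', but the delicacy is real: a purely topological choice of clopen $\mathscr{H}$-classes cannot, on its own, force local finiteness at $0$.

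The paper closes this gap by bringing in the semigroup operation. One first passes to an open $V(0)\subseteq U(0)$ with $V(0)\cdot V(0)\cdot V(0)\subseteq U(0)$ and then runs a dichotomy on $V(0)$ rather than on $U(0)$. Either infinitely many summands $B_{\lambda_i}(G_i)$ contain a non-empty open set disjoint from $V(0)$ --- in which case those open sets \emph{are} disjoint from the neighbourhood $V(0)$ of $0$ by construction, so the family is genuinely locally finite and pseudocompactness is violated --- or $V(0)$ is dense in all but finitely many summands. In the latter case one invokes Proposition~\ref{proposition-2.5} (built on Lemmas~\ref{lemma-2.2}--\ref{lemma-2.3}, which use the group structure: a dense subset times a non-empty open set equals the whole group) to conclude $V(0)\cdot V(0)\cdot V(0)$ already contains a cofinite orthogonal subsum $S'$, hence $U(0)\supseteq S'\in\mathscr{B}(0)$, contradicting the choice of $U(0)$. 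It is this algebraic density argument, not just the clopen $\mathscr{H}$-class structure, that supplies the missing piece.
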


\begin{proof}
By Theorem~II.4.3 of~\cite{Petrich1984} the semigroup $S$ is an orthogonal sum of Brandt semigroups and hence $S$ is isomorphic to the orthogonal sum $\sum_{i\in\mathscr{I}}B_{\lambda_{i}}(G_i)$ of Brandt $\lambda_i$-extensions $B_{\lambda_i}(G_i)$ of groups $G_i$. We fix any $i_0\in\mathscr{I}$. Since $S$ is a topological inverse semigroup, Proposition~II.2~\cite{EberhartSelden1969} implies that $B_{\lambda_{i_0}}(G_{i_0})$ is a topological inverse semigroup. By Proposition~\ref{proposition-2.1}, $B_{\lambda_{i_0}}(G_{i_0})$ is a pseudocompact topological Brandt $\lambda_i$-extension of pseudocompact  topological group $G_{i_0}$ in the category $\mathfrak{TISG}$ for some finite cardinal $\lambda_{i_0}\geqslant 1$. This completes the proof of the first assertion of the theorem.

The second statement of the theorem is trivial in the case when the set of indices $\mathscr{I}$ is finite. Hence later we assume that the set $\mathscr{I}$ is infinite.

Suppose on the contrary that $\mathscr{B}(0)$ is not a base at zero $0$ of $S$. Then, there exists an open neighbourhood $U(0)$ of zero $0$ such that $U(0)\bigcup\big(B_{\lambda_{i_1}}(G_{i_1})\cup B_{\lambda_{i_2}}(G_{i_2})\cup\cdots\cup B_{\lambda_{i_n}}(G_{i_n})\big)^*\neq S$ for finitely many indices $i_1, i_2,\ldots,i_n\in\mathscr{I}$. Let $V(0)\subseteq U(0)$ be an open neighbourhood of $0$ in $S$ such that $V(0)\cdot V(0)\cdot V(0)\subseteq U(0)$. Then we have that $V(0)\bigcup\big(B_{\lambda_{i_1}}(G_{i_1})\cup B_{\lambda_{i_2}}(G_{i_2})\cup\cdots\cup B_{\lambda_{i_n}}(G_{i_n})\big)^*\neq S$. We state that there exist a sequence of distinct points $\{x_k\}_{k\in\mathbb{N}}$
of the semigroup $S$ and a sequence of open subsets $\{U(x_k)\}_{k\in\mathbb{N}}$ of $S$ such that the following conditions hold:
\begin{itemize}
  \item[$(i)$] $x_k\in U(x_k)\subseteq B_{\lambda_{i_k}}(G_{i_k})$ for some $i_k\in\mathscr{I}$;
  \item[$(ii)$] if $x_{k_1},x_{k_2}\in B_{\lambda_{i_k}}(G_{i_k})$ for some $i_k\in\mathscr{I}$, then $k_1=k_2$;
  \item[$(iii)$] $\bigcup_{k\in\mathbb{N}}U(x_k)\subseteq S\setminus V(0)$.
\end{itemize}
Otherwise we have that $V(0)$ is a dense subset of the subspace
\begin{equation*}
 S^{\prime}=S\setminus \bigcup\big(B_{\lambda_{i_1}}(G_{i_1})\cup B_{\lambda_{i_2}}(G_{i_2})\cup\cdots\cup B_{\lambda_{i_n}}(G_{i_n}))\big)^*,
\end{equation*}
for some positive integer $n$.
Since $S^{\,\prime}$ with induced operation from $S$ is a primitive inverse semigroup Proposition~\ref{proposition-2.5} implies that $V(0)\cdot V(0)\cdot V(0)=S^{\,\prime}$ which contradicts the choice of the neighbourhood $U(0)$. The obtained contradiction implies that there exists finitely many indexes $i_1, i_2,\ldots,i_n,\ldots,i_m\in \mathscr{I}$ where $m>n$ such that
\begin{equation*}
U(0)\cup\big(B_{\lambda_{i_1}}(G_{i_1})\cup B_{\lambda_{i_2}}(G_{i_2})\cup\cdots\cup B_{\lambda_{i_n}}(G_{i_n})\cup\cdots\cup B_{\lambda_{i_m}}(G_{i_n})\big)^*=S.
\end{equation*}
This completes the proof of the theorem.
\end{proof}

\begin{proposition}\label{proposition-2.8}
Let $S$ be a primitive Hausdorff pseudocompact topological inverse semigroup which is topologically isomorphic to the orthogonal sum $\sum_{i\in\mathscr{I}}B_{\lambda_{i}}(G_i)$ of topological Brandt $\lambda_i$-extensions $B_{\lambda_i}(G_i)$ of topological groups $G_i$ in the category $\mathfrak{TISG}$ for some cardinals $\lambda_i\geqslant 1$. Then the following conditions hold:
\begin{itemize}
  \item[$(i)$] the space $S$ is Tychonoff if and only if for every $i\in\mathscr{I}$ the space of the topological group $G_i$ is Tychonoff, i.e., $G_i$ is a $T_0$-space;
  \item[$(ii)$] the space $S$ is normal if and only if for every $i\in\mathscr{I}$ the space of the topological group $G_i$ is normal.
\end{itemize}
\end{proposition}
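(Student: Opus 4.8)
The plan is to reduce both parts to the clopen structure of $S$ furnished by Proposition~\ref{proposition-2.1} and Theorem~\ref{theorem-2.7}, and then run the standard separation arguments by gluing over open-and-closed subsets. First I would record the structural facts I need. By Theorem~\ref{theorem-2.7} each $\lambda_i$ is finite, and by Proposition~\ref{proposition-2.1}$(ii)$ every non-zero $\mathscr{H}$-class $G_{\alpha,\beta}$ with $\alpha,\beta\in\lambda_i$ is an open-and-closed subset of $S$; moreover each such $\mathscr{H}$-class is homeomorphic to $G_i$ (this is standard for topological Brandt extensions and follows from the description of the local base at non-zero points in Remark~\ref{remark-2.6} together with Definition~\ref{def1}$(b)$). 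Since $\lambda_i$ is finite, $B_{\lambda_i}(G_i)^*=\bigcup_{\alpha,\beta\in\lambda_i}G_{\alpha,\beta}$ is open-and-closed in $S$, and so is any finite union of such sets. Finally, the base $\mathscr{B}(0)$ from \eqref{eq-2.1} shows that every closed set $F\subseteq S$ with $0\notin F$ is contained in an open-and-closed set $C=B_{\lambda_{i_1}}(G_{i_1})^*\cup\cdots\cup B_{\lambda_{i_n}}(G_{i_n})^*$ with $0\notin C$; note $C$ is a finite topological sum of clopen copies of groups $G_{i_k}$.

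The two ``only if'' directions are immediate from heredity. By Definition~\ref{def1}$(b)$ the subspace $G_{\alpha,\alpha}$ is naturally homeomorphic to $G_i$, and by Proposition~\ref{proposition-2.1}$(ii)$ it is open-and-closed, hence closed, in $S$. If $S$ is Tychonoff then so is every subspace, in particular $G_i$, giving $(i)$; if $S$ is normal then every closed subspace is normal, and since $G_i$ is closed in $S$ we get that $G_i$ is normal, giving $(ii)$. Here I also invoke the classical fact that a $T_0$ topological group is Tychonoff, so that the two formulations in $(i)$ agree.

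For the ``if'' directions I treat both properties by the same clopen-gluing device: if $W$ is open in an open-and-closed $C\subseteq S$ then $W$ is open in $S$, and a function continuous on $C$ and constant on the clopen complement $S\setminus C$ is continuous on $S$. For $(i)$, to separate a point $x$ from a closed set $F\not\ni x$: if $x\neq 0$ then $x\in G_{\alpha,\beta}$, a clopen copy of the Tychonoff group $G_i$, so I separate $x$ from $F\cap G_{\alpha,\beta}$ by a map $G_{\alpha,\beta}\to[0,1]$ and extend it by the value $1$ on $S\setminus G_{\alpha,\beta}$; if $x=0$ then $F\subseteq C$ for a clopen $C$ as above with $0\notin C$, and the characteristic function of $C$ separates $0$ from $F$. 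For $(ii)$, assume every $G_i$ is normal and let $F_1,F_2$ be disjoint closed sets; since $\lambda_i$ is finite each finite union $C$ of the sets $B_{\lambda_j}(G_j)^*$ is a finite sum of normal spaces, hence normal. At most one of $F_1,F_2$ contains $0$. If neither does, then $F_1\cup F_2$ avoids $0$, hence lies in a normal clopen $C$, and I separate $F_1,F_2$ by disjoint sets open in $C$, which are open in $S$. If, say, $0\in F_2$ and $0\notin F_1$, I pick a normal clopen $C$ with $F_1\subseteq C$ and $0\notin C$, separate the disjoint closed subsets $F_1$ and $F_2\cap C$ inside $C$ by disjoint open $W_1\supseteq F_1$ and $W_2\supseteq F_2\cap C$, and set $U_1=W_1$, $U_2=W_2\cup(S\setminus C)$; then $U_1,U_2$ are open in $S$, are disjoint because $W_1\subseteq C$, and contain $F_1,F_2$ respectively. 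Thus $S$ is normal.

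The routine verifications are that the glued functions and open sets are continuous and disjoint, which follow from the clopen-ness of $C$ and of the $G_{\alpha,\beta}$. The main obstacle is the structural step of the first paragraph: establishing that the $\mathscr{H}$-classes and the pieces $B_{\lambda_i}(G_i)^*$ are open-and-closed with $\lambda_i$ finite and, crucially, that a closed set missing $0$ is swallowed by a finite union of these pieces. This last point is exactly what the base $\mathscr{B}(0)$ of Theorem~\ref{theorem-2.7} supplies, and it is what lets the argument survive an infinite index set $\mathscr{I}$: all but finitely many pieces are absorbed into the clopen complement $S\setminus C$ and never have to be separated.
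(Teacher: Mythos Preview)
Your proposal is correct and follows essentially the same route as the paper: heredity for the forward directions, and for the converse directions the clopen structure of the non-zero $\mathscr{H}$-classes together with the base $\mathscr{B}(0)$ at zero from Theorem~\ref{theorem-2.7}, using exactly the same gluing of separating functions (for Tychonoff) and the same two-case analysis $0\notin F_1\cup F_2$ versus $0\in F_1\cup F_2$ (for normality). The only cosmetic differences are that the paper cites Proposition~12 of \cite{BerezovskiGutikPavlyk2010} for the homeomorphism of each non-zero $\mathscr{H}$-class with the corresponding $G_i$, and that in the normal case it puts $0\in F_1$ rather than $0\in F_2$; the logic is identical.
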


\begin{proof}
We observe that the $T_0$-topological space of a topological group is Tychonoff (see Theorem~2.6.4 in \cite{HewittRoss1963}).

$(i)$ Implication $(\Rightarrow)$ follows from Theorem~2.1.6 of \cite{Engelking1989}.

$(\Leftarrow)$ Suppose that for every $i\in\mathscr{I}$ the space of the topological group $G_i$ is Tychonoff. We fix an arbitrary element $x\in S$. First we consider the case when $x\neq 0$. Then there exists an non-zero $\mathscr{H}$-class $H$ which contains $x$. By Proposition~12 from \cite{BerezovskiGutikPavlyk2010} there exists $i\in\mathscr{I}$ such that the topological space $H$ is homeomorphic to the topological group $G_i$. Then by Proposition~1.5.8 from \cite{Engelking1989} for every open neighbourhood $U(x)$ of $x$ in $H$ there exists a continuous map $f\colon H\rightarrow[0,1]$ such that $f(x)=0$ and $f(y)=1$ for all $y\in H\setminus U(x)$. We define the map $\widetilde{f}\colon S\rightarrow[0,1]$ in the following way:
\begin{equation*}
    \widetilde{f}(y)=
\left\{
  \begin{array}{ll}
    f(y), & \hbox{if~~} y\in H;\\
    1, & \hbox{if~~} y\in S\setminus H.
  \end{array}
\right.
\end{equation*}
Since by Proposition~12 from \cite{BerezovskiGutikPavlyk2010} every non-zero $\mathscr{H}$-class is an open-and-closed subset of $S$ we conclude that such defined map $\widetilde{f}\colon S\rightarrow[0,1]$ is continuous.

Suppose that $x=0$. We fix an arbitrary $U(0)=S\setminus\big(B_{\lambda_{i_1}}(G_{i_1})\cup
 B_{\lambda_{i_2}}(G_{i_2})\cup\cdots\cup B_{\lambda_{i_n}}(G_{i_n})\big)^*\in\mathscr{B}(0)$. Then by Proposition~12 from \cite{BerezovskiGutikPavlyk2010}, $U(0)$ is an open-and-closed subset of $S$. Thus we have that the map $f\colon S\rightarrow[0,1]$ defined by the formula
\begin{equation*}
    \widetilde{f}(y)=
\left\{
  \begin{array}{ll}
    0, & \hbox{if~~} y\in U(0);\\
    1, & \hbox{if~~} y\in S\setminus U(0),
  \end{array}
\right.
\end{equation*}
is continuous, and hence by Proposition~1.5.8 from \cite{Engelking1989} the space $S$ is Tychonoff.

Next we shall prove statement $(ii)$.

$(\Rightarrow)$ Suppose that $S$ is a normal space. By Lemma~9 of \cite{BerezovskiGutikPavlyk2010} we have that every $\mathscr{H}$-class of $S$ is a closed subset of $S$. Then by Theorem~2.1.6 from \cite{Engelking1989} we have that  every $\mathscr{H}$-class of $S$ is a normal subspace of $S$ and hence Definition~\ref{def1} and Proposition~12 of \cite{BerezovskiGutikPavlyk2010} imply that for every $i\in\mathscr{I}$ the space of the topological group $G_i$ is normal.

$(\Leftarrow)$ Suppose that for every $i\in\mathscr{I}$ the space of the topological group $G_i$ is normal. Let $F_1$ and $F_2$ be arbitrary closed disjoint subsets of $S$.

At first we consider the case when $0\notin F_1\cup F_2$. Then there exists an open neighbourhood $U(0)$ of zero in $S$ such that $F_1\cup F_2\subseteq S\setminus U(0)$, i.e., there exist finitely many $i_1, i_2,\ldots,i_n\in\mathscr{I}$ such that
\begin{equation*}
F_1\cup F_2\subseteq \left(B_{\lambda_{i_1}}(G_{i_1})\cup
B_{\lambda_{i_2}}(G_{i_2}) \cup\cdots\cup B_{\lambda_{i_n}}(G_{i_n})\right)\setminus\{0\}.
\end{equation*}
By Corollary~8 of \cite{BerezovskiGutikPavlyk2010} every non-zero $\mathscr{H}$-class of $S$ is open subset in $S$, and hence we get that the subspace $\left(B_{\lambda_{i_1}}(G_{i_1})\cup
B_{\lambda_{i_2}}(G_{i_2})\cup\cdots\cup B_{\lambda_{i_n}}(G_{i_n})\right)\setminus\{0\}$ of $S$ is a topological sum of some non-zero $\mathscr{H}$-classes of $S$, and hence it is an open subspace of $S$. Then by Theorem~2.2.7 from \cite{Engelking1989} we have that $\left(B_{\lambda_{i_1}}(G_{i_1})\cup
B_{\lambda_{i_2}}(G_{i_2})\cup\cdots\cup B_{\lambda_{i_n}}(G_{i_n})\right)\setminus\{0\}$ is a normal space. Therefore, there exist disjoint open neighbourhoods $V(F_1)$ and $V(F_2)$ of $F_1$ and $F_2$ in $\left(B_{\lambda_{i_1}}(G_{i_1})\cup
B_{\lambda_{i_2}}(G_{i_2})\cup\cdots\cup B_{\lambda_{i_n}}(G_{i_n})\right)\setminus\{0\}$, and hence in $S$, respectively.

Suppose that $0\in F_1\cup F_2$. Without loss of generality we can assume that $0\in F_1$. Then there exist finitely many $i_1, i_2,\ldots,i_n\in\mathscr{I}$ such that
\begin{equation*}
F_2\subseteq \left(B_{\lambda_{i_1}}(G_{i_1})\cup
B_{\lambda_{i_2}}(G_{i_2}) \cup\cdots\cup B_{\lambda_{i_n}}(G_{i_n})\right)\setminus\{0\}.
\end{equation*}
The assumption of the proposition implies that the set $\left(B_{\lambda_{i_1}}(G_{i_1})\cup
B_{\lambda_{i_2}}(G_{i_2}) \cup\cdots\cup B_{\lambda_{i_n}}(G_{i_n})\right)\setminus\{0\}$ is closed in $S$ and hence
\begin{equation*}
\widetilde{F}_1=F_1\cap\left(\left(B_{\lambda_{i_1}}(G_{i_1})\cup
B_{\lambda_{i_2}}(G_{i_2}) \cup\cdots\cup B_{\lambda_{i_n}}(G_{i_n})\right)\setminus\{0\}\right)
\end{equation*}
is a closed subset of $S$, too. Then the previous arguments of the proof imply that \begin{equation*}
\left(B_{\lambda_{i_1}}(G_{i_1})\cup
B_{\lambda_{i_2}}(G_{i_2})\cup\cdots\cup B_{\lambda_{i_n}}(G_{i_n})\right)\setminus\{0\}
\end{equation*}
is a normal space, and hence there exist open disjoint neighbourhoods $W(\widetilde{F}_1)$ and $U(F_2)$ of the closed sets $\widetilde{F}_1$ and $F_2$ in $\left(B_{\lambda_{i_1}}(G_{i_1})\cup
B_{\lambda_{i_2}}(G_{i_2})\cup\cdots\cup B_{\lambda_{i_n}}(G_{i_n})\right)\setminus\{0\}$, and hence in $S$, respectively. We put
\begin{equation*}
U(F_1)=S\setminus\left(B_{\lambda_{i_1}}(G_{i_1})\cup
B_{\lambda_{i_2}}(G_{i_2}) \cup\cdots\cup B_{\lambda_{i_n}}(G_{i_n})\right)^*\cup W(\widetilde{F}_1).
\end{equation*}
Then we have that $U(F_1)$ and $U(F_2)$ are open disjoint neighbourhoods of $F_1$ and $F_2$ in $S$, respectively. This completes the proof of statement $(ii)$.
\end{proof}

Theorem~\ref{theorem-2.7} and Proposition~\ref{proposition-2.8} imply the following:

\begin{corollary}\label{corollary-2.9}
Every primitive Hausdorff pseudocompact topological inverse semigroup $S$ is a Tychonoff topological space. Moreover the topological space of $S$ is normal if and only if every maximal subgroup of $S$ is a normal subspace.
\end{corollary}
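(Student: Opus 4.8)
The plan is to read off both assertions from the structure theorem together with Proposition~\ref{proposition-2.8}, so there is little to do beyond invoking these results. First I would apply Theorem~\ref{theorem-2.7} to write $S$, up to topological isomorphism, as an orthogonal sum $\sum_{i\in\mathscr{I}}B_{\lambda_i}(G_i)$ of topological Brandt $\lambda_i$-extensions of \emph{pseudocompact} topological groups $G_i$ with each $\lambda_i\geqslant 1$ finite. This reduces every topological question about $S$ to the corresponding question about the factors $G_i$ via the two equivalences of Proposition~\ref{proposition-2.8}.

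For the Tychonoff assertion the crucial observation is the one already recorded at the beginning of the proof of Proposition~\ref{proposition-2.8}: a $T_0$ topological group is Tychonoff (Theorem~2.6.4 of~\cite{HewittRoss1963}). Since $S$ is Hausdorff, each $G_i$ is a Hausdorff, hence $T_0$, topological group, and is therefore Tychonoff. Thus the right-hand side of the equivalence in Proposition~\ref{proposition-2.8}$(i)$ holds automatically, and that proposition yields that $S$ is Tychonoff. Pseudocompactness enters only through Theorem~\ref{theorem-2.7} to supply the decomposition; the Tychonoff conclusion is then a free consequence of the Hausdorffness of $S$.

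For the normality statement I would apply Proposition~\ref{proposition-2.8}$(ii)$ directly, which gives that $S$ is normal if and only if each factor $G_i$ is a normal subspace, and then translate the condition on the $G_i$ into a condition on maximal subgroups. In the extension $B_{\lambda_i}(G_i)$ the $\mathscr{H}$-class of a diagonal idempotent $(\alpha,e,\alpha)$, $\alpha\in\lambda_i$, is a subgroup isomorphic to $G_i$, and these $\mathscr{H}$-classes are exactly the maximal subgroups of $S$; by Proposition~12 of~\cite{BerezovskiGutikPavlyk2010} each of them is homeomorphic to the corresponding $G_i$. Hence ``every maximal subgroup of $S$ is a normal subspace'' is merely a restatement of ``every $G_i$ is normal'', and the equivalence follows. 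The only point requiring any care --- and the closest thing to an obstacle in an otherwise formal argument --- is this identification of the maximal subgroups with the factors $G_i$ up to homeomorphism; once the homeomorphisms of Proposition~12 are in hand, normality transfers immediately in both directions.
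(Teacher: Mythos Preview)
Your proposal is correct and follows exactly the route the paper takes: the corollary is stated immediately after Proposition~\ref{proposition-2.8} with the one-line justification ``Theorem~\ref{theorem-2.7} and Proposition~\ref{proposition-2.8} imply the following'', and your write-up simply unpacks that implication, including the identification of the maximal subgroups with the factors $G_i$ via Proposition~12 of~\cite{BerezovskiGutikPavlyk2010}.
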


By Theorem~3.10.21 from \cite{Engelking1989} every normal pseudocompact space is countably compact, and hence Corollary~\ref{corollary-2.9} implies the following:

\begin{corollary}\label{corollary-2.10}
Every primitive Hausdorff pseudocompact topological inverse semigroup $S$ such that every maximal subgroup of $S$ is a normal subspace in $S$ is countably compact.
\end{corollary}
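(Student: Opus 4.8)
The plan is to reduce the statement to a direct application of two already-available facts: the normality criterion recorded in Corollary~\ref{corollary-2.9} and the classical implication that a normal pseudocompact space is countably compact, namely Theorem~3.10.21 of \cite{Engelking1989}.

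First I would observe that, by hypothesis, $S$ is a primitive Hausdorff pseudocompact topological inverse semigroup in which every maximal subgroup is a normal subspace. By the second assertion of Corollary~\ref{corollary-2.9}, this condition is precisely equivalent to the normality of the topological space $S$; hence $S$ is a normal space. It is worth noting in passing that $S$ is in any case Tychonoff by the first assertion of Corollary~\ref{corollary-2.9}, so no separation pathology can interfere with the argument.

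Next, since $S$ is by assumption pseudocompact and has just been shown to be normal, I would invoke Theorem~3.10.21 of \cite{Engelking1989}, which asserts that every normal pseudocompact space is countably compact. Applying it to $S$ yields at once that $S$ is countably compact, which is the desired conclusion.

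Since both ingredients are already established, I do not expect any genuine obstacle: the only point requiring attention is confirming that the hypothesis of the corollary coincides verbatim with the normality condition appearing in the equivalence of Corollary~\ref{corollary-2.9}, which it does. All the substantive work—the characterisation of normality of $S$ in terms of normality of its maximal subgroups—was carried out in the proof of Proposition~\ref{proposition-2.8} and absorbed into Corollary~\ref{corollary-2.9}, so this corollary is a short combination of that characterisation with a standard topological implication.
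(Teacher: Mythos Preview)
Your proposal is correct and matches the paper's own justification essentially verbatim: the paper derives Corollary~\ref{corollary-2.10} directly from Corollary~\ref{corollary-2.9} together with Theorem~3.10.21 of \cite{Engelking1989}, exactly as you do. The aside about $S$ being Tychonoff is unnecessary but harmless.
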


\begin{proposition}\label{proposition-2.11}
Every primitive pseudocompact topological inverse semigroup $S$ is a continuous (non-homomorphic) image of the product $\widetilde{E}_S\times G_S$, where $\widetilde{E}_S$ is a compact semilattice and $G_S$ is a pseudocompact topological group.
\end{proposition}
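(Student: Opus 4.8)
The plan is to use the structure theorem to realise $S$ as an orthogonal sum and then build the required product explicitly. By Theorem~\ref{theorem-2.7} I may assume $S=\sum_{i\in\mathscr{I}}B_{\lambda_{i}}(G_i)$, where each $\lambda_i$ is a finite cardinal and each $G_i$ is a pseudocompact topological group; write $e_i$ for the identity of $G_i$, so that the non-zero idempotents of $S$ are precisely the elements $(\alpha,e_i,\alpha)$ with $i\in\mathscr{I}$ and $\alpha\in\lambda_i$. I would take $\widetilde{E}_S=E(S)\times E(S)$, which is a compact semilattice because $E(S)$ is one by Proposition~\ref{proposition-2.1}$(i)$ and a product of compact semilattices is again a compact semilattice, and $G_S=\prod_{i\in\mathscr{I}}G_i$, which is a topological group and is pseudocompact by the Comfort--Ross theorem that the Tychonoff product of pseudocompact topological groups is pseudocompact.

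Next I would define $\varphi\colon\widetilde{E}_S\times G_S\to S$ as follows. For $e,f\in E(S)$ and $g=(g_k)_{k\in\mathscr{I}}\in G_S$ I put $\varphi((e,f),g)=(\alpha,g_i,\beta)$ whenever $e=(\alpha,e_i,\alpha)$ and $f=(\beta,e_i,\beta)$ lie in the same block $B_{\lambda_{i}}(G_i)$, and $\varphi((e,f),g)=0$ otherwise, in particular whenever $e=0$ or $f=0$. Surjectivity is immediate: given a non-zero $(\alpha,h,\beta)\in B_{\lambda_{i}}(G_i)$ one takes $e=(\alpha,e_i,\alpha)$, $f=(\beta,e_i,\beta)$ and any $g\in G_S$ with $g_i=h$, while $0=\varphi((0,0),g)$. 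The map is visibly not a homomorphism, so the whole content of the proposition reduces to the continuity of $\varphi$.

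For continuity I would split into cases according to the value of $\varphi$. If $\varphi((e,f),g)=(\alpha,g_i,\beta)\neq 0$, then $e$ and $f$ are non-zero idempotents, hence isolated in $E(S)$ by Proposition~\ref{proposition-2.1}$(i)$, so $\{(e,f)\}$ is open in $\widetilde{E}_S$; on the neighbourhood $\{(e,f)\}\times G_S$ the map factors as $g\mapsto g_i\mapsto(\alpha,g_i,\beta)$, the composition of the projection $G_S\to G_i$ with the canonical homeomorphism of $G_i$ onto the $\mathscr{H}$-class of $(\alpha,e_i,\beta)$ (open-and-closed by Proposition~\ref{proposition-2.1}$(ii)$), hence continuous. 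The same openness of $\{(e,f)\}$ settles continuity wherever $e,f$ are non-zero idempotents in different blocks, since there $\varphi$ is locally constant equal to $0$. The main obstacle is continuity at points $((e,f),g)$ with $\varphi((e,f),g)=0$ and $e=0$ or $f=0$, because $0$ need not be isolated in $E(S)$. Here I would invoke the base $\mathscr{B}(0)$ of Theorem~\ref{theorem-2.7}: given $U(0)=S\setminus\big(B_{\lambda_{i_1}}(G_{i_1})\cup\cdots\cup B_{\lambda_{i_n}}(G_{i_n})\big)^*$, the finiteness of each $\lambda_{i_j}$ guarantees that the set $F$ of all idempotents $(\gamma,e_{i_j},\gamma)$ with $1\le j\le n$ and $\gamma\in\lambda_{i_j}$ is finite, so $O(0)=E(S)\setminus F$ is an open neighbourhood of $0$ in $E(S)$. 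Assuming say $e=0$, the set $\big(O(0)\times E(S)\big)\times G_S$ is a neighbourhood of $((0,f),g)$ which $\varphi$ sends into $U(0)$, since any non-zero value $(\gamma,g'_k,\delta)$ forces the first idempotent coordinate to be a non-zero element of $O(0)$ and hence to lie in a block $k\notin\{i_1,\ldots,i_n\}$. This proves $\varphi$ continuous and completes the argument, the case $f=0$ being symmetric.
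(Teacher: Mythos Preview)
Your argument is correct. The overall strategy coincides with the paper's: take $G_S=\prod_{i\in\mathscr{I}}G_i$ (pseudocompact by Comfort--Ross), encode the source and target indices of each $\mathscr{H}$-class in a compact ``index semilattice'', and check continuity using isolation of non-zero idempotents together with the base $\mathscr{B}(0)$ at zero from Theorem~\ref{theorem-2.7}.

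The one genuine difference is the choice of $\widetilde{E}_S$. The paper builds an ad hoc semilattice: for each $i$ it sets $E_i=(\lambda_i\times\lambda_i)\cup\{0\}$ with the primitive semilattice operation, takes the orthogonal sum $\widetilde{E}_S=\sum_{i\in\mathscr{I}}E_i$, and equips it with the Alexandroff one-point compactification topology; the map is then $f((\alpha,\beta),g)=(\alpha,\pi_i(g),\beta)$ on $E_i^*\times G_S$ and $0$ on $\{0\}\times G_S$. You instead recycle $E(S)$ itself and put $\widetilde{E}_S=E(S)\times E(S)$, which is immediately a compact topological semilattice by Proposition~\ref{proposition-2.1}$(i)$, at the cost of having extra ``mixed-block'' pairs that you simply send to $0$. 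Your choice is a little more economical, since the compactness of $\widetilde{E}_S$ comes for free and no new semilattice needs to be defined or verified; the paper's choice makes $\widetilde{E}_S$ minimal in the sense that distinct non-zero elements go to distinct $\mathscr{H}$-classes, and makes the later use in Theorem~\ref{theorem-2.12} marginally cleaner. Either construction suffices for the statement as written.
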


\begin{proof}
By Theorem~\ref{theorem-2.7} the topological semigroup $S$ is topologically isomorphic to the orthogonal sum $\sum_{i\in\mathscr{I}}B_{\lambda_{i}}(G_i)$ of topological Brandt $\lambda_i$-extensions $B_{\lambda_i}(G_i)$ of pseudocompact topological groups $G_i$ in the category $\mathfrak{TISG}$ for some finite cardinals $\lambda_i\geqslant 1$ and the family defined by formula (\ref{eq-2.1}) determines the base of the topology at zero of $S$.

Fix an arbitrary $i\in\mathscr{I}$. Then by Proposition~\ref{proposition-2.1}$(iv)$ the set $E(B_{\lambda_i}(G_i))$ is finite. Suppose that $|E(B_{\lambda_i}(G_i))|=n_i+1$ for some integer $n_i$. Then we have that $\lambda_i=n_i\geqslant1$. On the set $E_i=(\lambda_i\times\lambda_i)\cup\{0\}$, where $0\notin\lambda_i\times\lambda_i$ we define the binary operation in the following way
\begin{equation*}
(\alpha,\beta)\cdot(\gamma,\delta)=
    \left\{
       \begin{array}{cl}
         (\alpha,\beta), & \hbox{if~} (\alpha,\beta)=(\gamma,\delta);\\
         0, & \hbox{otherwise,}
       \end{array}
     \right.
\end{equation*}
and $0\cdot(\alpha,\beta)=(\alpha,\beta)\cdot 0=0\cdot 0=0$ for all $\alpha,\beta,\gamma,\delta\in\lambda_i$. Simple verifications show that $E_i$ with such defined operation is a semilattice and every non-zero idempotent of $E_i$ is primitive.

By $\widetilde{E}_S$ we denote the orthogonal sum $\sum_{i\in\mathscr{I}}E_i$. It is obvious that $\widetilde{E}_S$ is a semilattice and every non-zero idempotent of $\widetilde{E}_S$ is primitive. We determine on $\widetilde{E}_S$ the topology of the Alexandroff one-point compactification $\tau_A$: all non-zero idempotents of $\widetilde{E}_S$ are isolated points in $\widetilde{E}_S$ and the family
\begin{equation*}
    \mathscr{B}(0)=\big\{U\colon U\ni 0 \;\hbox{ and } \; \widetilde{E}_S\setminus U \; \hbox{ is finite}\big\}
\end{equation*}
is the base of the topology $\tau_A$ at zero $0\in\widetilde{E}_S$. Simple verifications show that $\widetilde{E}_S$ with the topology $\tau_A$ is a Hausdorff compact topological semilattice. Later we denote $(\widetilde{E}_S,\tau_A)$ by $\widetilde{E}_S$.

Let $G_S=\prod_{i\in\mathscr{I}}G_i$ be the direct product of pseudocompact groups $G_i$, $i\in\mathscr{I}$, with the Tychonoff topology. Then by Comfort--Ross Theorem (see Theorem~1.4 in \cite{ComfortRoss1966}) we get that $G_S$ is a pseudocompact topological group. Also by Corollary~3.10.27 from \cite{Engelking1989} we have that the product $\widetilde{E}_S\times G_S$ is a pseudocompact space.

Later for every $i\in\mathscr{I}$ by $\pi_i\colon G_S=\prod_{i\in\mathscr{I}}G_i\rightarrow G_i$ we denote the projection on the $i$-th factor.

Now, for every $i\in\mathscr{I}$ we define the map $f_i\colon E_i\times G_S\rightarrow B_{\lambda_i}(G_i)$  by the formulae $f_i((\alpha,\beta),g)=(\alpha,\pi_i(g),\beta)$ and $f_i(0,g)=0_i$ is zero of the semigroup $B_{\lambda_i}(G_i)$, and put $f=\bigcup_{i\in\mathscr{I}}f_i$. It is obvious that the map $f\colon \widetilde{E}_S\times G_S\rightarrow S$ is well defined. The definition of the topology $\tau_A$ on $\widetilde{E}_S$ implies that for every $((\alpha,\beta),g)\in E_i\times G_i\subseteq \widetilde{E}_S\times G_i$  the set $\{(\alpha,\beta)\}\times G_i$ is open in $\widetilde{E}_S\times G_S$ and hence the map $f$ is continuous at the point $((\alpha,\beta),g)$. Also for every $U(0)=S\setminus\big(B_{\lambda_{i_1}}(G_{i_1})\cup
B_{\lambda_{i_2}}(G_{i_2})\cup\cdots\cup B_{\lambda_{i_n}}(G_{i_n})\big)^*$ the set $f^{-1}(U(0))=\big(\widetilde{E}_S\setminus\left((\lambda_{i_1}\times\lambda_{i_1}) \cup\ldots\cup(\lambda_{i_n}\times\lambda_{i_n})\right)\big)\times G_S$ is open in $\widetilde{E}_S\times G_S$, and hence the map $f$ is continuous.
\end{proof}

The following theorem is an analogue of Comfort--Ross Theorem for primitive pseudocompact topological inverse semigroup.

\begin{theorem}\label{theorem-2.12}
Let $\{S_i\colon i\in\mathscr{J}\}$ be a non-empty family of primitive Hausdorff pseudocompact topological inverse semigroups. Then the direct product $\prod_{j\in\mathscr{J}}S_j$ with the Tychonoff topology is a pseudocompact topological inverse semigroup.
\end{theorem}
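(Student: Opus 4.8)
The plan is to reduce the pseudocompactness of the product to the Comfort--Ross theorem by passing through the factorization of each factor provided by Proposition~\ref{proposition-2.11}. First I would dispose of the algebraic and separation-axiom content, which is routine: the Tychonoff product $\prod_{j\in\mathscr{J}}S_j$ of Hausdorff topological inverse semigroups is again a topological inverse semigroup, because the semigroup operation and the inversion act coordinatewise and are therefore continuous, a product of inverse semigroups is an inverse semigroup, and a product of Hausdorff spaces is Hausdorff. Thus the only substantive assertion is that the underlying product space is pseudocompact.

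For the pseudocompactness I would invoke Proposition~\ref{proposition-2.11} for each factor: for every $j\in\mathscr{J}$ there is a continuous surjection $f_j\colon\widetilde{E}_{S_j}\times G_{S_j}\to S_j$, where $\widetilde{E}_{S_j}$ is a compact semilattice and $G_{S_j}$ is a pseudocompact topological group. Forming the product of these maps yields a continuous surjection
\begin{equation*}
\prod_{j\in\mathscr{J}}f_j\colon \prod_{j\in\mathscr{J}}\big(\widetilde{E}_{S_j}\times G_{S_j}\big)\longrightarrow \prod_{j\in\mathscr{J}}S_j ,
\end{equation*}
and, after rearranging coordinates, the domain is homeomorphic to $\big(\prod_{j\in\mathscr{J}}\widetilde{E}_{S_j}\big)\times\big(\prod_{j\in\mathscr{J}}G_{S_j}\big)$.

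Now the two factors are treated separately. The factor $\prod_{j\in\mathscr{J}}\widetilde{E}_{S_j}$ is a product of compact spaces, hence compact by the Tychonoff theorem. The factor $\prod_{j\in\mathscr{J}}G_{S_j}$ is a product of pseudocompact topological groups, hence pseudocompact by the Comfort--Ross Theorem (Theorem~1.4 in \cite{ComfortRoss1966}); equivalently, it is the product of the pseudocompact groups $G_i$ indexed by the disjoint union of the index sets, to which Comfort--Ross applies directly. Then Corollary~3.10.27 of \cite{Engelking1989}, asserting that the product of a compact space and a pseudocompact space is pseudocompact, shows that the domain $\big(\prod_{j}\widetilde{E}_{S_j}\big)\times\big(\prod_{j}G_{S_j}\big)$ is pseudocompact. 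Finally, since a continuous image of a pseudocompact space is pseudocompact, I conclude that $\prod_{j\in\mathscr{J}}S_j$ is pseudocompact, which finishes the argument.

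The conceptual heart of the proof, and its only genuine obstacle, is that pseudocompactness is \emph{not} preserved by arbitrary Tychonoff products: a product of pseudocompact spaces may fail to be pseudocompact. The whole purpose of routing through Proposition~\ref{proposition-2.11} is to split each factor into a compact semilattice part, whose product stays compact, and a topological \emph{group} part, whose product stays pseudocompact precisely because the Comfort--Ross theorem guarantees productivity of pseudocompactness within the class of topological groups. I expect no difficulty in the coordinatewise verification that the product is a topological inverse semigroup; the single nontrivial ingredient is the appeal to Comfort--Ross.
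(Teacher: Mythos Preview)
Your proposal is correct and follows essentially the same approach as the paper: reduce to Proposition~\ref{proposition-2.11}, rearrange the product of domains as $\big(\prod_j\widetilde{E}_{S_j}\big)\times\big(\prod_jG_{S_j}\big)$, apply Tychonoff plus Comfort--Ross and Engelking's Corollary~3.10.27 to the two factors, and push pseudocompactness forward along the continuous product map. The only difference is presentational---you spell out surjectivity and the rationale for routing through groups more explicitly than the paper does.
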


\begin{proof}
Since the direct product of the non-empty family topological inverse semigroups is a topological inverse semigroup, it is sufficient to show that the space $\prod_{j\in\mathscr{J}}S_j$ is pseudocompact. Let $\widetilde{E}_{S_j}$, ${G}_{S_j}$, and $f_j\colon \widetilde{E}_{S_j}\times {G}_{S_j}\rightarrow S_j$ be the semilattice, the group and the map, respectively, defined in the proof of Proposition~\ref{proposition-2.11} for any $j\in\mathscr{J}$. Since the space $\prod_{j\in\mathscr{J}}\left(\widetilde{E}_{S_j}\times {G}_{S_j}\right)$ is homeomorphic to the product $\prod_{j\in\mathscr{J}}\widetilde{E}_{S_j}\times \prod_{j\in\mathscr{J}}{G}_{S_j}$ we conclude that by Theorem~3.2.4, Corollary~~3.10.27 from \cite{Engelking1989} and Theorem~1.4 from \cite{ComfortRoss1966} the space $\prod_{j\in\mathscr{J}}\left(\widetilde{E}_{S_j}\times {G}_{S_j}\right)$ is pseudocompact. Now, since the map $\prod_{j\in\mathscr{J}}f_j\colon \prod_{j\in\mathscr{J}}\left(\widetilde{E}_{S_j}\times {G}_{S_j}\right)\rightarrow \prod_{j\in\mathscr{J}}S_j$ is continuous we have that $\prod_{j\in\mathscr{J}}S_j$ is a pseudocompact topological space.
\end{proof}

Theorem~\ref{theorem-2.12} implies the following corollary:

\begin{corollary}\label{corollary-2.13}
Let $\{S_i\colon i\in\mathscr{J}\}$ be a non-empty family of Brandt Hausdorff pseudocompact topological inverse semigroups. Then the direct product $\prod_{j\in\mathscr{J}}S_j$ with the Tychonoff topology is a pseudocompact topological inverse semigroup.
\end{corollary}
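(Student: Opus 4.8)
The plan is to observe that the class of Brandt topological inverse semigroups sits inside the class of primitive ones, so that the statement becomes a direct specialization of Theorem~\ref{theorem-2.12} with no separate argument required. The key observation is structural: by Theorem~II.4.3 of \cite{Petrich1984} a primitive inverse semigroup is precisely an orthogonal sum $\sum_{i\in\mathscr{I}}B_{\lambda_i}(G_i)$ of Brandt semigroups, and a single Brandt semigroup $B_\lambda(G)$ is exactly such an orthogonal sum over a one-element index set $\mathscr{I}=\{i\}$. Hence every Brandt inverse semigroup is, in particular, a primitive inverse semigroup, and this identification respects the topology: a Brandt \emph{topological} inverse semigroup is a primitive topological inverse semigroup in the sense used throughout Section~2.

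First I would record that each member $S_j$ of the given family, being a Brandt Hausdorff pseudocompact topological inverse semigroup, therefore qualifies as a primitive Hausdorff pseudocompact topological inverse semigroup, with all three hypotheses (Hausdorffness, pseudocompactness, and being a topological inverse semigroup) carried over verbatim. The family $\{S_j\colon j\in\mathscr{J}\}$ is then a non-empty family of primitive Hausdorff pseudocompact topological inverse semigroups, which is exactly the input required by Theorem~\ref{theorem-2.12}.

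Applying Theorem~\ref{theorem-2.12} to this family immediately gives that the direct product $\prod_{j\in\mathscr{J}}S_j$ with the Tychonoff topology is a pseudocompact topological inverse semigroup, which is the assertion. I expect no genuine obstacle here, since the entire content reduces to the inclusion of Brandt semigroups among primitive inverse semigroups. The only point deserving a moment's attention is purely definitional, namely confirming that viewing $B_\lambda(G)$ as an orthogonal sum over a singleton index set does not alter its topology or its standing as a topological inverse semigroup; this is immediate from the construction of the orthogonal sum, so the proof amounts to a one-line invocation of the preceding theorem.
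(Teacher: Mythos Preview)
Your proposal is correct and matches the paper's approach exactly: the paper simply states that Theorem~\ref{theorem-2.12} implies the corollary, with no further argument given. Your observation that a Brandt semigroup is a primitive inverse semigroup (an orthogonal sum over a singleton index set) is precisely the one-line reduction the paper has in mind, so there is nothing to add.
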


\begin{remark}\label{remark-2.14}
E. K. van Douwen \cite{van_Douwen1980} showed that Martin's Axiom implies the existence of two countably compact groups (without non-trivial convergent sequences) such that theirs product is not countably compact. Hart and van Mill \cite{Hart-van_Mill1991} showed that Martin's Axiom for countable posets implies the existence of a countably compact group which square is not countably compact. Tomita in \cite{Tomita1996} showed that under $MA_{\textrm{countable}}$ for each positive integer $k$ there exists a group which $k$-th power is countably compact but its $2k$-th power is not countably compact. In particular, there was proved that for each positive integer $k$ there exists $l=k,\ldots, 2k-1$ and a group which $l$-th power is not countably compact. In \cite{Tomita1999} Tomita constructed a topological group under $MA_{\textrm{countable}}$ which square is countably compact but its cube is not. Also, Tomita in \cite{Tomita2005} showed that the existence of $2^\mathfrak{c}$ mutually incorparable selective ultrafilters and $2^\mathfrak{c}=2^{2^\mathfrak{c}}$ implies that there exists a topological group $G$  such that $G^\gamma$ is countably compact for all cardinals $\gamma<\kappa$, but $G^\kappa$ is not countably compact for every cardinal $\kappa\leqslant 2^\mathfrak{c}$. Using these results and the construction of finite topological Brandt $\lambda^0$-extensions proposed in \cite{GutikRepovs2010} we may show that statements similar to aforementioned hold for Hausdorff countably compact Brandt topological inverse semigroups and hence for Hausdorff countably compact primitive topological inverse semigroups.
\end{remark}

\section{The Stone-\v{C}ech compactification of a pseudocompact primitive topological inverse semigroup}

Let a Tychonoff topological space $X$ be a topological sum of subspaces $A$ and $B$, i.e., $X=A\bigoplus B$. It is obvious that every continuous map $f\colon A\rightarrow K$ from $A$ into a compact space $K$ (resp., $f\colon B\rightarrow K$ from $B$ into a compact space $K$) extends to a continuous map $\widehat{f}\colon X\rightarrow K$. This implies the following proposition:

\begin{proposition}\label{proposition-3.1}
If a Tychonoff topological space $X$ is a topological sum of subspaces $A$ and $B$, then $\beta X$ is equivalent to $\beta A\bigoplus\beta B$.
\end{proposition}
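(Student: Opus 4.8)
The plan is to show that the topological sum $\beta A \bigoplus \beta B$ \emph{is} a Stone-\v{C}ech compactification of $X = A \bigoplus B$, and then to invoke the uniqueness of $\beta X$ to obtain the claimed equivalence. Recall that $\beta X$ is characterized, up to a homeomorphism fixing $X$ pointwise, as the compact Hausdorff space which contains $X$ as a dense subspace and enjoys the universal extension property recorded in the preliminaries. Thus it suffices to verify that $\beta A \bigoplus \beta B$ possesses all three of these features.

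First I would observe that $\beta A \bigoplus \beta B$ is compact and Hausdorff: a topological sum of finitely many compact spaces is compact, and a topological sum of Hausdorff spaces is Hausdorff, so the assertion follows from the fact that $\beta A$ and $\beta B$ are each compact Hausdorff. Next I would check that $X = A \bigoplus B$ sits as a dense subspace. Since $A$ is dense in $\beta A$ and $B$ is dense in $\beta B$, and since closures are computed summand-wise in a topological sum, we have $\operatorname{cl}_{\beta A \bigoplus \beta B}(A \bigoplus B) = \operatorname{cl}_{\beta A}(A) \bigoplus \operatorname{cl}_{\beta B}(B) = \beta A \bigoplus \beta B$, whence $X$ is dense.

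The heart of the matter --- already anticipated in the paragraph preceding the statement --- is the extension property. Given any continuous map $f \colon X \to K$ into a compact Hausdorff space $K$, its restrictions $f|_A \colon A \to K$ and $f|_B \colon B \to K$ are continuous, so by the universal property of $\beta A$ and of $\beta B$ they extend to continuous maps $\overline{f|_A} \colon \beta A \to K$ and $\overline{f|_B} \colon \beta B \to K$. These two maps glue to a single map on $\beta A \bigoplus \beta B$, which is continuous because a map out of a topological sum is continuous precisely when its restriction to each summand is. The glued map extends $f$, so the extension property holds.

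Having verified all three defining features, the uniqueness of the Stone-\v{C}ech compactification yields that $\beta A \bigoplus \beta B$ is equivalent to $\beta X$, as desired. I expect no genuine obstacle in this argument: its entire content is organizing the universal property of $\beta A$ and $\beta B$ across the disjoint-sum decomposition, and the only place demanding a little care is confirming that both density and continuity are determined summand-wise. Working through the universal property, rather than attempting to build an explicit homeomorphism $\beta X \to \beta A \bigoplus \beta B$ directly, is what keeps the proof short.
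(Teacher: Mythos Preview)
Your proof is correct and follows essentially the same approach as the paper. The paper gives only a one-sentence sketch preceding the proposition --- observing that any continuous map from $A$ (or $B$) into a compact space extends over $X$ --- and declares the result to follow; your argument is simply a full verification of the universal property of $\beta X$ for the candidate $\beta A\bigoplus\beta B$, which is the natural way to unpack that sketch. The only cosmetic difference is direction: the paper implicitly works inside $\beta X$ (extensions from $A$ to $X$ force $\operatorname{cl}_{\beta X}(A)=\beta A$), whereas you build $\beta A\bigoplus\beta B$ externally and check it satisfies the defining property of $\beta X$; both are the same universal-property argument.
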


The following theorem describes the structure of the Stone-\v{C}ech compactification of a primitive pseudocompact topological inverse semigroup.

\begin{theorem}\label{theorem-3.2}
Let $S$ be a primitive pseudocompact topological inverse semigroup. Then the Stone-\v{C}ech compactification of $S$ admits a structure of primitive topological inverse semigroup with respect to which the inclusion mapping of $S$ into $\beta{S}$ is a topological isomorphism. Moreover, $\beta{S}$ is topologically isomorphic to the orthogonal sum $\sum_{i\in\mathscr{I}}B_{\lambda_{i}}(\beta G_i)$ of topological Brandt $\lambda_i$-extensions $B_{\lambda_i}(\beta G_i)$ of compact topological groups $\beta G_i$ in the category $\mathfrak{TISG}$ for some finite cardinals $\lambda_i\geqslant 1$.
\end{theorem}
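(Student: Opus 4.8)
The plan is to combine the structural description of Theorem~\ref{theorem-2.7} with a factorwise application of the Comfort--Ross theorem and Proposition~\ref{proposition-3.1}. By Theorem~\ref{theorem-2.7} I may assume $S=\sum_{i\in\mathscr{I}}B_{\lambda_{i}}(G_i)$ with each $\lambda_i$ finite and each $G_i$ a pseudocompact topological group, and that the family $\mathscr{B}(0)$ of (\ref{eq-2.1}) is a base at the zero $0$. By Corollary~\ref{corollary-2.9} the space $S$ is Tychonoff, so $\beta S$ is defined; moreover each $G_i$ embeds in $S$ and is therefore Tychonoff, whence by the Comfort--Ross theorem \cite{ComfortRoss1966} the Stone--\v{C}ech compactification $\beta G_i$ carries a structure of a compact topological group containing $G_i$ as a dense topological subgroup. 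Since $\lambda_i$ is finite, $B_{\lambda_{i}}(\beta G_i)$ is then a compact topological inverse semigroup: its multiplication and inversion are continuous because the index matching ranges over a finite discrete set and the group operations of $\beta G_i$ are continuous.

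First I would treat a single summand. Fix $i$. Because $\lambda_i$ is finite and, by Proposition~\ref{proposition-2.1}$(ii)$, each non-zero $\mathscr{H}$-class $(G_i)_{\alpha,\beta}$ is open-and-closed, the finite union $\bigcup_{\alpha,\beta}(G_i)_{\alpha,\beta}=B_{\lambda_{i}}(G_i)\setminus\{0\}$ is closed, so the zero of $B_{\lambda_{i}}(G_i)$ is isolated. Hence $B_{\lambda_{i}}(G_i)$ is the topological sum of the $\lambda_i^2$ clopen copies $(G_i)_{\alpha,\beta}\cong G_i$ together with the isolated point $\{0\}$. Iterating Proposition~\ref{proposition-3.1} over these finitely many summands and using $\beta\big((G_i)_{\alpha,\beta}\big)=\beta G_i$ and $\beta\{0\}=\{0\}$, I obtain a homeomorphism $\beta\big(B_{\lambda_{i}}(G_i)\big)\cong B_{\lambda_{i}}(\beta G_i)$ fixing $B_{\lambda_{i}}(G_i)$. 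Transporting the operation from the compact topological inverse semigroup $B_{\lambda_{i}}(\beta G_i)$ identifies it as the Stone--\v{C}ech compactification of the topological inverse semigroup $B_{\lambda_{i}}(G_i)$.

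Next I would assemble the pieces. Put $T=\sum_{i\in\mathscr{I}}B_{\lambda_{i}}(\beta G_i)$ with the orthogonal-sum topology whose base at $0$ consists of the complements of finite partial unions of the clopen sets $B_{\lambda_{i}}(\beta G_i)^*$. A routine check (each $B_{\lambda_{i}}(\beta G_i)^*$ is compact and clopen, and any neighbourhood of $0$ omits only finitely many of them) shows $T$ is compact and Hausdorff, and a case analysis on whether the two factors lie in the same summand shows that its multiplication and inversion are continuous, so $T$ is a compact primitive topological inverse semigroup. The inclusion $S\hookrightarrow T$ is a dense topological embedding: each $B_{\lambda_{i}}(G_i)$ is dense in $B_{\lambda_{i}}(\beta G_i)$, and the bases at $0$ match because intersecting $T\setminus\bigcup_{k}B_{\lambda_{i_k}}(\beta G_{i_k})^*$ with $S$ returns a member of $\mathscr{B}(0)$. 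It remains to verify the universal property, that every continuous $f\colon S\to[0,1]$ extends continuously to $T$. On the subspace $B_{\lambda_{i}}(G_i)$ of $S$ the zero is isolated, so $f$ restricts to a continuous function on the standalone $B_{\lambda_{i}}(G_i)$ and, by the single-summand case, extends to a continuous $\widetilde{f}_i\colon B_{\lambda_{i}}(\beta G_i)\to[0,1]$ with $\widetilde{f}_i(0)=f(0)$; gluing gives a well-defined $\widetilde{f}\colon T\to[0,1]$, continuous on each clopen $B_{\lambda_{i}}(\beta G_i)^*$.

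The main obstacle is continuity of $\widetilde{f}$ at the glued zero $0$ of $T$ when $\mathscr{I}$ is infinite, since a priori the extensions $\widetilde{f}_i$ need not be uniformly near $f(0)$. Here I would use continuity of $f$ at $0$ in $S$: given $\varepsilon>0$ there is $U(0)=S\setminus\bigcup_{k=1}^{n}B_{\lambda_{i_k}}(G_{i_k})^*\in\mathscr{B}(0)$ with $|f(u)-f(0)|<\varepsilon$ for $u\in U(0)$, and for every $i\notin\{i_1,\dots,i_n\}$ one has $B_{\lambda_{i}}(G_i)\subseteq U(0)$, so $f$ maps $B_{\lambda_{i}}(G_i)$ into $[f(0)-\varepsilon,f(0)+\varepsilon]$. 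Density of $B_{\lambda_{i}}(G_i)$ in $B_{\lambda_{i}}(\beta G_i)$ then forces $\widetilde{f}_i$ to take values in the same interval, so $\widetilde{f}$ is within $\varepsilon$ of $f(0)$ on the whole neighbourhood $T\setminus\bigcup_{k=1}^{n}B_{\lambda_{i_k}}(\beta G_{i_k})^*$ of $0$. This yields continuity and hence $T\cong\beta S$. Since an orthogonal sum of Brandt semigroups is a primitive inverse semigroup by Theorem~II.4.3 of \cite{Petrich1984}, transporting the operation of $T$ to $\beta S$ gives the desired compact primitive topological inverse semigroup structure for which the inclusion $S\hookrightarrow\beta S$ is a topological isomorphism onto its image, and exhibits $\beta S\cong\sum_{i\in\mathscr{I}}B_{\lambda_{i}}(\beta G_i)$.
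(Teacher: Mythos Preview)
Your proof is correct and takes a genuinely different route from the paper's. The paper leverages Theorem~\ref{theorem-2.12} to conclude that $S\times S$ is pseudocompact, then invokes Glicksberg's theorem \cite{Glicksberg1959} to obtain $\beta(S\times S)=\beta S\times\beta S$, and finally appeals to a result of Banakh--Dimitrova \cite{BanakhDimitrova2010} to deduce that the multiplication of $S$ extends continuously to $\beta S$; only afterwards does it identify $\beta S$ with $\sum_{i}B_{\lambda_i}(\beta G_i)$ by a short contradiction showing there are no extra points. You instead build the candidate compactification $T=\sum_{i}B_{\lambda_i}(\beta G_i)$ directly from Comfort--Ross and Proposition~\ref{proposition-3.1}, verify by hand that $T$ is a compact primitive topological inverse semigroup containing $S$ densely, and then establish $T\cong\beta S$ by checking $C^*$-embedding via the explicit $\varepsilon$-argument at the zero. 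Your approach is more self-contained---it avoids Glicksberg and \cite{BanakhDimitrova2010} entirely and does not depend on Theorem~\ref{theorem-2.12}---at the price of the hands-on continuity verifications; the paper's approach is shorter and makes the semigroup structure on $\beta S$ automatic rather than something to be transported and matched by density.
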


\begin{proof}
By Theorem~\ref{theorem-2.7}, every primitive pseudocompact topological inverse semigroup $S$ is topologically isomorphic to the orthogonal sum $\sum_{i\in\mathscr{I}}B_{\lambda_{i}}(G_i)$ of topological Brandt $\lambda_i$-extensions $B_{\lambda_i}(G_i)$ of pseudocompact topological groups $G_i$ in the category $\mathfrak{TISG}$ for some finite cardinals $\lambda_i\geqslant 1$, such that any non-zero $\mathscr{H}$-class of $S$ is an open-and-closed subset of $S$, and the family $\mathscr{B}(0)$ defined by formula (\ref{eq-2.1}) determines a base of the topology at zero $0$ of $S$.

By Theorem~\ref{theorem-2.12}, $S\times S$ is a pseudocompact topological space. Now by Theorem~1 of \cite{Glicksberg1959}, we have that $\beta(S\times S)$ is equivalent to $\beta S\times \beta S$, and hence by Theorem~1.3~\cite{BanakhDimitrova2010}, $S$ is a subsemigroup of the compact topological semigroup $\beta S$.

By Proposition~\ref{proposition-3.1} for every non-zero $\mathscr{H}$-class $(G_i)_{k,l}$, $k,l\in\lambda_i$, we have that $\operatorname{cl}_{\beta S}((G_i)_{k,l})$ is equivalent to $\beta(G_i)_{k,l}$, and hence it is equivalent to $\beta G_i$. Therefore we get that $\sum_{i\in\mathscr{I}}B_{\lambda_{i}}(G_i)\subseteq \beta S$. Suppose that $\sum_{i\in\mathscr{I}}B_{\lambda_{i}}(G_i)\neq \beta S$. We fix an arbitrary $x\in\beta S\setminus \sum_{i\in\mathscr{I}}B_{\lambda_{i}}(G_i)$. Then Hausdorffness of $\beta S$ implies that there exist open neighbourhoods $V(x)$ and $V(0)$ of the points $x$ and $0$ in $\beta S$, respectively, and there exist finitely many $i_1,\ldots,i_n\in\mathscr{I}$ such that $V(0)\cap\beta S\supseteq S\setminus\big(B_{\lambda_{i_1}}(G_{i_1})\cup B_{\lambda_{i_2}}(G_{i_2}) \cup \cdots \cup B_{\lambda_{i_n}}(G_{i_n})\big)^*$. Then we have that
\begin{equation*}
V(x)\cap S \subseteq \big(B_{\lambda_{i_1}}(G_{i_1})\cup B_{\lambda_{i_2}}(G_{i_2})\cup\cdots \cup B_{\lambda_{i_n}}(G_{i_n})\big)^*\subseteq \big(B_{\lambda_{i_1}}(\beta G_{i_1})\cup B_{\lambda_{i_2}}(\beta G_{i_2})\cup\cdots \cup B_{\lambda_{i_n}}(\beta G_{i_n})\big)^*,
\end{equation*}
and since by Theorem~\ref{theorem-2.7}, $\lambda_i$ is finite for every $i\in\mathscr{I}$, we get a contradiction with the initial assumption. This completes the proof of the theorem.
\end{proof}

Theorem~\ref{theorem-3.2} implies the following:

\begin{corollary}\label{corollary-3.3}
Let $S$ be a primitive countably compact topological inverse semigroup. Then the Stone-\v{C}ech compactification of $S$ admits a structure of primitive topological inverse semigroup with respect to which the inclusion mapping of $S$ into $\beta{S}$ is a topological isomorphism.
\end{corollary}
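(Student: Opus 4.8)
The plan is to deduce this directly from Theorem~\ref{theorem-3.2}. The single observation needed is that countable compactness is stronger than pseudocompactness: as recorded in Section~1 (citing \cite{Engelking1989}), every countably compact Hausdorff space is pseudocompact. Hence a primitive countably compact topological inverse semigroup $S$ is, in particular, a primitive Hausdorff pseudocompact topological inverse semigroup. By Corollary~\ref{corollary-2.9} such an $S$ is a Tychonoff space, so its Stone-\v{C}ech compactification $\beta S$ is defined, and all the hypotheses of Theorem~\ref{theorem-3.2} are satisfied.

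Applying Theorem~\ref{theorem-3.2} to $S$ then yields that $\beta S$ carries a structure of primitive topological inverse semigroup with respect to which the inclusion mapping $S\hookrightarrow\beta S$ is a topological isomorphism onto its image, which is precisely the assertion of the corollary. There is no genuine obstacle here: the whole content resides in Theorem~\ref{theorem-3.2}, and the present statement merely records the special case of countably compact inputs. I would note in passing that the finer conclusion of Theorem~\ref{theorem-3.2}, namely that $\beta S$ is topologically isomorphic to the orthogonal sum $\sum_{i\in\mathscr{I}}B_{\lambda_i}(\beta G_i)$ of topological Brandt $\lambda_i$-extensions of the compact groups $\beta G_i$ for finite cardinals $\lambda_i\geqslant 1$, likewise remains valid for countably compact $S$, since the reduction above places $S$ within the scope of that theorem.
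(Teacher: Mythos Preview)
Your proposal is correct and matches the paper's own approach: the paper simply records Corollary~\ref{corollary-3.3} as an immediate consequence of Theorem~\ref{theorem-3.2}, relying on the fact (stated in Section~1) that countably compact Hausdorff spaces are pseudocompact. Your additional remark invoking Corollary~\ref{corollary-2.9} to guarantee that $S$ is Tychonoff, and your observation that the finer structural conclusion of Theorem~\ref{theorem-3.2} also carries over, are welcome clarifications but do not depart from the paper's line of argument.
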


\begin{remark}\label{remark-3.4}
Theorem~\ref{theorem-3.2} and Corollary~\ref{corollary-3.3} give the positive answer to the Question~1, which we posed in \cite{BerezovskiGutikPavlyk2010}.
\end{remark}

We define the series of categories as follows:
\begin{itemize}
    \item[$(i)$] Let     $\operatorname{\textbf{Ob}}(\mathscr{B}^*(\mathscr{CC\!T\!G}))$ be all Hausdorff  $0$-simple countably compact topological inverse semigroups;
    \item[] Let     $\operatorname{\textbf{Ob}}(\mathscr{B}^*(\mathscr{PC\!T\!G}))$ be all Hausdorff pseudocompact topological inverse Brandt semigroups;
    \item[] Let $\operatorname{\textbf{Ob}}(\mathscr{P\!PC\!T\!G})$ be all primitive Hausdorff pseudocompact topological inverse semigroups;
    \item[] Let $\operatorname{\textbf{Ob}}(\mathscr{PCC\!T\!G})$ be all primitive Hausdorff pseudocompact topological inverse semigroups;
   \item[$(ii)$]  Let $\operatorname{\textbf{Mor}}(\mathscr{B}^*(\mathscr{CC\!T\!G})))$,
       $\operatorname{\textbf{Mor}}(\mathscr{B}^*(\mathscr{PC\!T\!G}))$,
       $\operatorname{\textbf{Mor}}(\mathscr{P\!PC\!T\!G})$, and
       $\operatorname{\textbf{Mor}}(\mathscr{PCC\!T\!G})$ be continuous homomorphisms of of corresponding topological inverse semigroups.
\end{itemize}

Comfort and Ross \cite{ComfortRoss1966} proved that the
Stone-\v{C}ech compactification of a pseudocompact topological
group is a topological group. Therefore the functor of the
Stone-\v{C}ech compactification \textbf{$\beta$} from the category
of pseudocompact topological groups back into itself determines a
monad. Similar result Gutik and Repov\v{s} proved in \cite{GutikRepovs2010} for the category  of all Hausdorff $0$-simple countably compact topological inverse semigroups $\mathscr{B}^*(\mathscr{CC\!T\!G})$. In the our case by Theorem~\ref{theorem-3.2} and Corollary~\ref{corollary-3.3} we get the same:

\begin{corollary}\label{corollary-3.5}
The functor of the Stone-\v{C}ech compactification $\beta\colon
\mathscr{B}^*(\mathscr{CC\!T\!G}) \rightarrow
\mathscr{B}^*(\mathscr{CC\!T\!G})$ $($resp., $\beta\colon
\mathscr{B}^*(\mathscr{PC\!T\!G}) \rightarrow
\mathscr{B}^*(\mathscr{PC\!T\!G}), \; \beta\colon
\mathscr{P\!PC\!T\!G} \rightarrow \mathscr{P\!PC\!T\!G}, \; \beta\colon
\mathscr{PCC\!T\!G} \rightarrow \mathscr{PCC\!T\!G} )$ determines a monad.
\end{corollary}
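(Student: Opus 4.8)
The plan is to exhibit, uniformly in each of the four categories, the three data of a monad---the endofunctor $\beta$, a unit $\eta\colon\mathrm{Id}\Rightarrow\beta$, and a multiplication $\mu\colon\beta\beta\Rightarrow\beta$---and then to check the unit and associativity laws. The arguments coincide in all four cases; they differ only in the preservation statement used to keep $\beta$ inside the category---Theorem~\ref{theorem-3.2} for $\mathscr{B}^*(\mathscr{PC\!T\!G})$, $\mathscr{P\!PC\!T\!G}$ and $\mathscr{PCC\!T\!G}$, and the corresponding result of \cite{GutikRepovs2010} for $\mathscr{B}^*(\mathscr{CC\!T\!G})$.

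First I would verify that $\beta$ is a well-defined endofunctor. On objects this is immediate from the structure theorems: if $S$ lies in one of these categories then, by Theorem~\ref{theorem-3.2} (resp.\ Corollary~\ref{corollary-3.3}), $\beta S$ is a compact primitive topological inverse semigroup of the same type, hence---being compact and therefore pseudocompact and countably compact---again an object of the category, and the inclusion $\eta_S\colon S\hookrightarrow\beta S$ is a topological isomorphism onto a dense subsemigroup. On morphisms, given a continuous homomorphism $f\colon S\to T$ I would define $\beta f\colon\beta S\to\beta T$ as the unique continuous extension of $\eta_T\circ f$ guaranteed by the universal property of the Stone-\v{C}ech compactification; functoriality $\beta(g\circ f)=\beta g\circ\beta f$ and $\beta\,\mathrm{id}=\mathrm{id}$ then follow from the uniqueness of such extensions, and naturality of $\eta$ holds because $\beta f$ restricts to $f$ on $S$.

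The substantive point is that $\beta f$ is again a morphism, i.e.\ a homomorphism. Here I would use that both $\beta S$ and $\beta T$ are compact topological inverse semigroups, so the two maps $(x,y)\mapsto\beta f(x\cdot y)$ and $(x,y)\mapsto\beta f(x)\cdot\beta f(y)$ from $\beta S\times\beta S$ into $\beta T$ are continuous; they agree on the dense subset $S\times S$ because $f$ is a homomorphism, and hence agree everywhere by density and Hausdorffness, so $\beta f$ preserves multiplication (and, being a homomorphism between inverse semigroups, the inversion as well). Since $\beta S$ is compact, $\beta$ is idempotent: the identity of $\beta S$ extends to a retraction $\mu_S\colon\beta\beta S\to\beta S$ which is an isomorphism inverse to $\eta_{\beta S}$, giving a natural isomorphism $\mu\colon\beta\beta\Rightarrow\beta$. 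Finally the unit laws $\mu_S\circ\beta\eta_S=\mu_S\circ\eta_{\beta S}=\mathrm{id}_{\beta S}$ and the associativity law $\mu_S\circ\beta\mu_S=\mu_S\circ\mu_{\beta S}$ reduce, on the compact objects involved, to the uniqueness of continuous extensions of the identity and so hold formally. I expect the only non-formal step to be the homomorphism property of $\beta f$, which rests entirely on the compactness and continuity of the semigroup operation supplied by Theorem~\ref{theorem-3.2}; everything else is categorical bookkeeping.
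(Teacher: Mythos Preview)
Your proposal is correct. The paper itself offers no proof of Corollary~\ref{corollary-3.5}: it merely remarks that the analogous statement for pseudocompact topological groups is due to Comfort and Ross~\cite{ComfortRoss1966}, that Gutik and Repov\v{s} established the $\mathscr{B}^*(\mathscr{CC\!T\!G})$ case in~\cite{GutikRepovs2010}, and that ``in our case by Theorem~\ref{theorem-3.2} and Corollary~\ref{corollary-3.3} we get the same''. Your write-up supplies precisely the details the authors suppress---the endofunctor structure, the homomorphism property of $\beta f$ via density, and the idempotency of $\beta$ on compact objects giving the unit and multiplication---and correctly identifies Theorem~\ref{theorem-3.2} as the only substantive input. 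So your approach is not different from the paper's; it is simply the honest execution of what the paper leaves implicit.
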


\section*{Acknowledgements}

This research was carried out with the support of the Estonian Science Foundation and co-funded by Marie Curie Action, grant ERMOS36.

We thank the referees and the editor for several comments and remarks.


\end{document}